\def\N{{\mathscr{N}}}
\def\ZZ{{\mathbb{Z}}}
\def\C{{\mathscr{C}}}
\def\PP{{\mathbb{P}}}
\def\E{{\mathscr{E}}}
\def\Q{{\mathscr{Q}}}
\def\F{{\mathscr{F}}}
\def\GG{{\mathbb{G}}}
\def\S{{\mathscr{S}}}
\newtheorem{them}{Theorem}[section]
\newtheorem{pro}[them]{Proposition}
\newtheorem{exa}[them]{Example}
\newtheorem{lem}[them]{Lemma}
\newtheorem{rem}[them]{Remark}
\newtheorem{cor}[them]{Corollary}
\newtheorem{conj}[them]{Conjecture}
\begin{document}

\title[Fano 5-folds with nef tangent bundles]{Fano 5-folds with nef tangent bundles and Picard numbers greater than one}
\author{Kiwamu Watanabe}
\date{\today}

\address{Course of Mathematics, Programs in Mathematics, Electronics and Informatics, 
Graduate School of Science and Engineering, Saitama University.
Shimo-Okubo 255, Sakura-ku Saitama-shi, 338-8570, Japan.}
\email{kwatanab@rimath.saitama-u.ac.jp}

\subjclass[2010]{Primary~14J40, 14J45, 14M17.}
\keywords{Fano manifold with nef tangent bundle, homogeneous manifold}

\maketitle



\begin{abstract}
We prove that smooth Fano 5-folds with nef tangent bundles and Picard numbers greater than one are rational homogeneous manifolds. 

\end{abstract}


\section{Introduction}

Characterization problems of special projective manifolds in terms of positivity properties of the tangent bundle have been considered by several authors. One of the most important results is S. Mori's solution of the Hartshorne-Frankel conjecture \cite{Mori}: a projective manifold with ample tangent bundle is a projective space.

As a generalization of Mori's theorem, F. Campana and T. Peternell  \cite{CP} proposed to study complex projective manifolds with nef tangent bundles and gave the classification in case of dimension $3$. After that, a structure theorem of such manifolds in arbitrary dimension was provided by J. P. Demailly, T. Peternell and M. Schneider \cite{DPS}: a projective (or more generally, compact K${\rm \ddot{a}}$lher) manifold $X$ with nef tangent bundle admits a finite \'etale cover $\tilde{X} \rightarrow X$ such that the Albanese map $\tilde{X} \rightarrow  {\rm Alb}(\tilde{X})$ is a smooth morphism whose fibers are Fano manifolds with nef tangent bundles.

Hence, we obtain the complete picture of projective manifolds with nef tangent bundles if the following conjecture due to Campana and Peternell is solved:  
\begin{conj}[{\cite{CP}}]\label{CPC} A Fano manifold $X$ with nef tangent bundle is rational homogeneous.
\end{conj}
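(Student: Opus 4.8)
The plan is to take the reduction recalled in the excerpt as the starting point: by the Demailly--Peternell--Schneider structure theorem, any projective manifold with nef tangent bundle is, up to finite \'etale cover, an Albanese fibration over an abelian variety with Fano fibers of the same type, so it suffices to treat the Fano case, which is exactly Conjecture~\ref{CPC}. I would then argue by double induction on the dimension $\dim X$ and the Picard number $\rho(X)$, the base cases being Mori's theorem (which yields $\PP^n$ when $T_X$ is ample) together with the low-dimensional classifications recalled above.

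For the inductive step when $\rho(X)\geq 2$, I would pick an elementary contraction $f\colon X\to Y$ associated to an extremal ray of the Mori cone. The key structural input is that every such contraction of a Fano manifold with nef tangent bundle is a \emph{smooth} morphism whose fibers $F$ are again Fano with nef tangent bundle and whose base $Y$ inherits a nef tangent bundle; this smoothness, which ultimately rests on the absence of jumping fibers when $T_X$ is nef, realizes both $F$ and $Y$ as strictly smaller instances of the problem. By induction $F$ and $Y$ are rational homogeneous, and the remaining task is to promote a smooth fibration of a homogeneous fiber over a homogeneous base $Y=G/P$ to a homogeneous total space. One analyzes the relative automorphism group of $f$ and shows, using local triviality in the relevant sense, that $X$ itself has the form $G'/P'$. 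This last step is delicate but tractable through the theory of principal bundles attached to parabolic subgroups, and it is the portion this paper carries out in the $5$-dimensional case.

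The genuine obstacle, and the reason the conjecture is open, is the primitive case $\rho(X)=1$. Here there is no nontrivial contraction, and homogeneity must be extracted from the infinitesimal geometry of minimal rational curves. The natural tool is the variety of minimal rational tangents $\C_x\subset\PP(T_xX)$ at a general point $x$; since $T_X$ is nef the minimal rational curves are standard and free, so $\C_x$ is smooth and irreducible. The strategy is to show that the nefness of $T_X$ forces the projective-geometric type of $\C_x$ to coincide with the VMRT of a rational homogeneous space of Picard number one, and then to invoke a Cartan--Fubini-type recognition theorem in the spirit of Mok and Hong--Hwang: a Fano manifold of Picard number one whose general VMRT is projectively equivalent to that of a homogeneous model, with flat VMRT-structure along minimal rational curves, is biregular to that model. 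The crux is to prove this \emph{rigidity} of $\C_x$ from the hypothesis that $T_X$ is nef alone, \emph{without} assuming the symmetry of $X$ in advance; controlling the second fundamental form and the tangential geometry of $\C_x$ in this generality is precisely the step that no present technique resolves, which is why Conjecture~\ref{CPC} is currently settled only in low dimensions and under additional hypotheses such as the one exploited here.
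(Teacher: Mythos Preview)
The statement you are attempting is Conjecture~\ref{CPC}, which the paper does \emph{not} prove; it is stated as an open conjecture, and the paper establishes only the special case $\dim X=5$, $\rho_X>1$ (Theorem~\ref{MT}). Your write-up is likewise not a proof: you explicitly concede that the $\rho_X=1$ case ``is precisely the step that no present technique resolves,'' so at best you are outlining a strategy and naming its known obstruction.

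Even restricting to the inductive step $\rho_X\geq 2$, there is a genuine gap. You assert that once the fiber $F$ and base $Y$ of an elementary contraction are known to be rational homogeneous, one can ``promote'' the fibration to $X\cong G'/P'$ via ``the theory of principal bundles attached to parabolic subgroups.'' No such general principle exists: a smooth fibration with homogeneous fiber over a homogeneous base need not be homogeneous, and the nefness of $T_X$ on the \emph{total space} must be invoked again at this stage, not merely passed down to $F$ and $Y$. This is exactly the content the paper has to supply, and its method is quite different from the abstract promotion you describe. For each possible base $Y$ of dimension $\leq 4$ (classified via Theorem~\ref{4}) the paper enumerates the candidate Fano bundles using \cite{APW} and related results, then eliminates the non-homogeneous ones case by case by producing curves of anticanonical degree $\leq 1$ or birational contractions (Lemmas~\ref{non} and \ref{f}, Proposition~\ref{fb}); finally it exploits the \emph{second} extremal contraction of $X$ together with Propositions~\ref{NO2} and \ref{WO} to pin down the remaining possibilities. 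Your sketch contains none of this, and the phrase ``delicate but tractable'' is doing all the work where an argument is required.
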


By the classification theory of Fano manifolds, one can check that this conjecture holds when $\dim X \leq 3$. Furthermore, Campana and Peternell \cite{CP2} gave an affirmative answer when $\dim X=4$ and the Picard number $\rho_X>1$. After that, via the works of \cite{CMSB}, \cite{Mi} and \cite{Mok}, the case when $\dim X=4$ was finally completed by J. M. Hwang \cite{Hwang}. However this conjecture remains open in $\dim X \geq 5$. Our main purpose of this article is to treat the case when $\dim X=5$ and $\rho_X>1$.

\begin{them}[=Theorem~\ref{MT2}]\label{MT} 

Let $X$ be a complex Fano manifold of dimension $5$ with nef tangent bundle and Picard number $\rho_X>1$. Then $X$ is a rational homogeneous manifold.
\end{them}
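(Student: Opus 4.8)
The plan is to induct on dimension through extremal contractions, bootstrapping from the fact that Conjecture~\ref{CPC} is already known in dimension at most four.

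First I would reduce to a smooth fibration. Since $\rho_X>1$, there is an elementary contraction $\varphi\colon X\to Y$. By the structure theory of extremal contractions of Fano manifolds with nef tangent bundle, $\varphi$ is a smooth fibration, its target $Y$ is again a Fano manifold with nef tangent bundle and $\rho_Y=\rho_X-1\ge 1$, and every fiber $F$ is a Fano manifold with nef tangent bundle. (That $T_Y$ is nef is immediate, since $\varphi^{*}T_Y$ is a quotient of the nef bundle $T_X$ and $\varphi$ is surjective; the deeper points are the smoothness of $\varphi$ and the nefness of $T_F$.) As $\rho_Y\ge 1$ forces $\dim Y\ge 1$, we get $1\le\dim F=5-\dim Y\le 4$ and $\dim Y\le 4$.

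Next I would identify the base and the fibers. Because $\dim Y\le 4$ and $\dim F\le 4$, the already settled cases of the conjecture --- dimension $\le 3$ by Campana--Peternell \cite{CP} and dimension $4$ by Campana--Peternell \cite{CP2} (for $\rho>1$) together with Hwang \cite{Hwang} --- show that $Y$ and $F$ are rational homogeneous. Since rational homogeneous manifolds are infinitesimally rigid, all fibers of $\varphi$ are isomorphic to one fixed rational homogeneous manifold $F$, so $\varphi$ is an \'etale-locally trivial fiber bundle over the rational homogeneous base $Y$ with rational homogeneous fiber $F$.

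The hard part --- and the main obstacle --- is to show that such a fibration, a Fano $5$-fold with nef tangent bundle fibred smoothly over a rational homogeneous manifold with rational homogeneous fibers, is itself rational homogeneous. I would argue case by case along the finite list of pairs $(\dim Y,\dim F)$ with $\dim Y+\dim F=5$, using the classification of rational homogeneous manifolds of dimension $\le 4$. When $F$ has Picard number one --- so $F=\mathbb{P}^{d}$ or $F$ is a quadric --- $\varphi$ is a projective or quadric bundle over $Y$; the relative Euler sequence converts nefness of $T_X$ into strong positivity constraints on the defining bundle, and since Fano projective and quadric bundles over rational homogeneous manifolds of small dimension are classified, one checks that $X$ occurs on the list of rational homogeneous $5$-folds and that no new example arises. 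When $F$ has Picard number greater than one, I would instead play the elementary contractions of $X$ against one another: the fibrations of $F$ and the contractions of $Y$ pull back to constrain the extremal rays of $X$, and analysing the resulting maps into products of the targets shows that $X$ decomposes as a product of lower-dimensional rational homogeneous manifolds or is of flag type. Either way $X$ is rational homogeneous. The delicate work is concentrated in this last step --- the bundle computations and the bookkeeping needed to exclude non-homogeneous Fano fibrations with nef tangent bundle --- while the inductive skeleton of the first two steps is routine given the quoted classifications.
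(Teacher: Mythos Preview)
Your skeleton matches the paper's: contract an extremal ray, use the $\le 4$-dimensional classification on base and fiber, then work through the resulting bundle cases. But two points deserve correction.

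First, your case split on $\rho_F$ is spurious. Because $\varphi$ contracts a single extremal ray and is smooth, every fiber $F$ automatically has $\rho_F=1$ (this is Proposition~\ref{Casa} in the paper). So $F$ is always $\PP^d$ or $Q^d$ with $d\le 4$, and your ``$\rho_F>1$'' branch never occurs. Ironically, the device you reserve for that branch --- playing two elementary contractions of $X$ against each other --- is exactly what the paper uses throughout. Since $\rho_X>1$, there are two distinct extremal contractions $f\colon X\to Y$ and $g\colon X\to Z$; the paper organizes the whole argument around the pair $(\dim Y,\dim Z)$ and the interaction between $f$ and $g$ (Lemma~\ref{l}, Proposition~\ref{NO2}, Proposition~\ref{WO}). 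A single contraction, even combined with nefness of $T_X$, is not enough to pin $X$ down without substantially more bundle-theoretic input.

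Second, your appeal to ``Fano projective and quadric bundles over rational homogeneous manifolds of small dimension are classified'' hides the actual work. The paper has to \emph{establish} the relevant classifications (Proposition~\ref{fb}) and, crucially, has to rule out specific non-homogeneous candidates that are Fano bundles over homogeneous bases: $\PP(T_{\PP^2})\times_{\PP^2}\PP(T_{\PP^2})$ (Lemma~\ref{non}), $\PP(\N)\times_{\PP^3}\PP(\N)$, and $\PP(\F)$ for a certain stable rank-two bundle on $Q^4$ (Lemma~\ref{f}). These exclusions are not automatic from the relative Euler sequence; they require explicit curve computations or geometric arguments showing $T_X$ fails to be nef. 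Without them your bookkeeping step is a genuine gap.
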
 

The proof proceeds as follows. Let $X$ be a Fano $5$-fold with nef tangent bundle of $\rho_X>1$. For any contraction $f: X \rightarrow Y$ of an extremal ray, $f$ is smooth, and $Y$ and the fibers $X_y$ are Fano manifolds with nef tangent bundles (Theorem~\ref{sm}). Furthermore, we see that $\rho_{X_y}=1$. Since Conjecture~\ref{CPC} holds for Fano manifolds of dimension $\leq 4$,  it is easy to see that $X$ is a holomorphic fiber bundle over a rational homogeneous manifold $Y$ whose fibers are projective spaces or quadrics (Lemma~\ref{l}). Since $\rho_X>1$, $X$ admits at least two different fiber bundle structures. Studying these bundle structures, we get the complete classification.

This paper is organized as follows: In Section~$2$, we recall some known results on Fano manifolds. Section~$3$ is dedicated to study properties of Fano manifolds with nef tangent bundles. Furthermore, we shall determine if some concrete examples of Fano manifolds with projective bundle structures have nef tangent bundles. In Section~$4$, we prove our main result Theorem~\ref{MT}. In the final section, we deal with Fano $5$-folds with nef tangent bundles of $\rho=1$.

In this paper, we use notation as in \cite{Ha} and every point on a variety we deal with is a closed point. Denote the $m$ times product of $\PP^n$ by $(\PP^n)^m$. A {\it $\PP^m$-bundle} means the Grothendieck projectivization of a rank $(m+1)$ vector bundle, whereas a smooth morphism whose fibers are isomorphic to $\PP^m$ will be called a {\it smooth $\PP^m$-fibration}. We work over the field of complex numbers. \\

\section{Known results on Fano manifolds}

A {\it Fano manifold} means a projective manifold $X$ with ample anticanonical divisor $-K_X$. For a Fano manifold $X$,  the {\it pseudoindex} is defined as the minimum $i_X$ of the anticanonical degrees of rational curves on $X$.

 Given a projective manifold $X$, we denote by $N_1(X)$ the space of $1$-cycles with real coefficients modulo numerical equivalence. The dimension of $N_1(X)$ is the Picard number $\rho_X$ of $X$.  The convex cone of effective $1$-cycles in $N_1(X)$ is denoted by $NE(X)$. 
By the Contraction Theorem, given a $K_X$-negative extremal ray $R$ of the Kleiman-Mori cone $\overline{NE}(X)$, we obtain the contraction of the extremal ray $\varphi_R :X \rightarrow Y$. 
We say that $\varphi_R$ is {\it of fiber type} if $\dim X > \dim Y$, otherwise it is {\it of birational type}.

\begin{pro}[{\cite[Lemma~3.3, Remark~3.7]{Casa}}]\label{Casa} Let $X$ be a Fano manifold, $f: X \rightarrow Y$ a contraction of an extremal ray of fiber type, and $X_y$ a fiber of $f$. Suppose that $f$ is smooth. Then $X_y$ is a Fano manifold of $\rho_{X_y}=1$.
\end{pro}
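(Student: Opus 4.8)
The plan is to verify, via adjunction, that $X_y$ is a smooth Fano manifold, and then to compute $\rho_{X_y}$ by combining the equality $\rho_X=\rho_Y+1$ (valid since $f$ contracts a single extremal ray $R$) with a comparison of the Néron--Severi groups of $X$, $Y$ and $X_y$.

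First I would treat the Fano property. Since $f$ is a smooth morphism, $X_y$ is a smooth projective variety of dimension $\dim X-\dim Y\ge 1$, and restricting the (locally split) relative tangent sequence $0\to T_{X/Y}\to T_X\to f^{*}T_Y\to 0$ to $X_y$ identifies the normal bundle $N_{X_y/X}$ with $(f^{*}T_Y)|_{X_y}=(T_Y)_y\otimes_{\mathbb{C}}\mathcal{O}_{X_y}$, which is trivial. By adjunction $\omega_{X_y}\cong\omega_X|_{X_y}\otimes\det N_{X_y/X}\cong\omega_X|_{X_y}$, so $-K_{X_y}=(-K_X)|_{X_y}$ is ample and $X_y$ is Fano. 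In particular $X$, $Y$ and $X_y$ are rationally connected, hence simply connected.

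For the Picard number, observe that any irreducible curve contained in a fiber of $f$ is contracted by $f$, so its numerical class lies on the extremal ray $R$; thus $\iota_{*}\colon N_1(X_y)\to N_1(X)$ (where $\iota\colon X_y\hookrightarrow X$) has image contained in the line $\mathbb{R}R$. It therefore suffices to prove that $\iota_{*}$ is injective — equivalently, that the restriction $\iota^{*}\colon N^1(X)\to N^1(X_y)$ is surjective — for then, $\iota_{*}$ being injective, $\rho_{X_y}=\dim\iota_{*}(N_1(X_y))$, which lies between $1$ (as $X_y$ carries a curve) and $1$ (as $\iota_{*}(N_1(X_y))\subseteq\mathbb{R}R$). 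Since $X$ and $X_y$ are Fano we have $H^2(X,\mathcal{O}_X)=H^2(X_y,\mathcal{O}_{X_y})=0$, so $N^1(X)_{\mathbb{R}}=H^2(X,\mathbb{R})$, $N^1(X_y)_{\mathbb{R}}=H^2(X_y,\mathbb{R})$, and $\iota^{*}$ is the topological restriction map. As $f$ is smooth and proper, the Leray spectral sequence $H^p(Y,R^qf_{*}\mathbb{R})\Rightarrow H^{p+q}(X,\mathbb{R})$ degenerates at $E_2$ (Deligne's degeneration theorem), and the sheaves $R^qf_{*}\mathbb{R}$ are constant because $\pi_1(Y)=1$; hence the edge homomorphism $H^2(X,\mathbb{R})\twoheadrightarrow E_2^{0,2}=H^0(Y,R^2f_{*}\mathbb{R})=H^2(X_y,\mathbb{R})$, which coincides with $\iota^{*}$, is surjective. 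This completes the proof.

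The only genuine content is this last step, i.e.\ the fact that every divisor class on $X_y$ extends to $X$. One can argue it algebraically instead — a line bundle on $X_y$ deforms uniquely over the formal neighborhood of $y$ since $H^1(X_y,\mathcal{O}_{X_y})=H^2(X_y,\mathcal{O}_{X_y})=0$, and one then spreads it out over $Y$ using that the relative Picard scheme is well behaved for $f$ smooth and proper — but this route is more delicate, so I expect the topological argument above to be the cleanest one. A variant, avoiding the injectivity reformulation, is to use the same degeneration together with $R^1f_{*}\mathbb{R}=0$ (again because the Fano fibers are simply connected) to get $\rho_X=\rho_Y+\rho_{X_y}$ directly, and then conclude with $\rho_X=\rho_Y+1$.
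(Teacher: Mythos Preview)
The paper does not prove this proposition: it is stated as a quotation of \cite[Lemma~3.3, Remark~3.7]{Casa} in the ``Known results'' section, and no argument is given in the text. There is therefore nothing in the present paper to compare your attempt against.

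Your argument is correct. Two small comments. First, the Fano part via adjunction with trivial normal bundle is exactly the mechanism the paper itself exploits later (Proposition~3.6). Second, for $\pi_1(Y)=1$ you need not invoke the (true but harder) fact that rationally connected smooth projective varieties are simply connected: since $f$ is smooth and proper it is a topological fibre bundle, and the homotopy exact sequence $\pi_1(X_y)\to\pi_1(X)\to\pi_1(Y)\to\pi_0(X_y)=\ast$ already gives $\pi_1(Y)$ as a quotient of $\pi_1(X)=1$. With that in hand, your Deligne-degeneration step yielding the surjectivity of $\iota^{*}\colon H^2(X,\mathbb{R})\to H^2(X_y,\mathbb{R})$ goes through as written, and the conclusion $\rho_{X_y}=1$ follows. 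Casagrande's own route in \cite{Casa} is closer to the algebraic variant you sketch at the end (extending line bundles across the family using $h^1(\mathcal{O}_{X_y})=h^2(\mathcal{O}_{X_y})=0$); the topological argument you give is a legitimate and arguably cleaner alternative.
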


\begin{pro}[{\cite[Lemma~4.1]{NO}}]\label{NO2} Let $X$ be a Fano manifold admitting a $\PP^r$-bundle structure $f: X \rightarrow Y$ and $R$ the extremal ray corresponding to $f$. If there exists a proper morphism $g: X \rightarrow Z$ onto a variety $Z$ of dimension $r$ which does not contract curves of $R$. Then $X \cong \PP^r \times Y$  
\end{pro}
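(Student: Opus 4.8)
The plan is to show that the product morphism $\phi := (f,g)\colon X \to Y \times Z$ is finite, and to deduce from this that the $\PP^r$-bundle $f$ is trivial.

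First I would analyse the restriction of $g$ to the fibres of $f$. Since $f$ is the contraction of the extremal ray $R$, a curve $C \subset X$ is contained in a fibre of $f$ precisely when $[C] \in R$; as $g$ does not contract curves of $R$, the restriction $g|_{X_y}\colon X_y \to Z$ has finite fibres for every fibre $X_y \cong \PP^r$ of $f$, so it is a finite morphism, and since $\dim X_y = r = \dim Z$ with $Z$ irreducible it is finite and surjective. Hence $\phi$ is proper with finite fibres (the fibre of $\phi$ over $(y,z)$ is $(g|_{X_y})^{-1}(z)$), so $\phi$ is finite; and $\phi$ is surjective because $\dim X = r + \dim Y = \dim(Y\times Z)$ and $Y\times Z$ is irreducible. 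In particular $g$ is equidimensional, its fibres have pure dimension $\dim Y$, and for every $z$ the map $f|_{g^{-1}(z)}\colon g^{-1}(z)\to Y$ is finite and surjective.

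Next I would reduce to a favourable situation by Stein factorisation: writing $g = h\circ g'$ with $g'\colon X\to Z'$ having connected fibres and $h$ finite, the variety $Z'$ is normal (because $X$ is smooth), has dimension $r$, and $g'$ still does not contract curves of $R$; so we may replace $(g,Z)$ by $(g',Z')$ and assume in addition that $Z$ is normal and $g$ has connected fibres. Now $\mathrm{Pic}(X) = \ZZ\,\xi\oplus f^*\mathrm{Pic}(Y)$ with $\xi = \O_X(1)$; choosing $H$ ample (indeed very ample) on $Z$, the divisor $g^*H$ is globally generated and satisfies $g^*H \equiv a\,\xi + f^*D$ with $a = g^*H\cdot\ell \ge 1$ for $\ell\in R$ a line in a fibre. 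Restricting to $X_y$ exhibits $g|_{X_y}\colon \PP^r\to Z$ as the morphism given by a base-point-free sub-system of $|\O_{\PP^r}(a)|$, with the same $a$ for all $y$.

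The heart of the matter, which I expect to be the main obstacle, is to promote this to the \emph{triviality} of the bundle: to show that the existence of $g$ — together with $X$ being Fano and $f$ a $\PP^r$-bundle — forces $f$ to be a \emph{trivial} $\PP^r$-bundle rather than merely a twisted one. The mechanism should be that, after the Stein reduction above, $g$ meets a general fibre $X_y$ in a single point: equivalently the general fibre $G = g^{-1}(z)$, which carries no curve of $R$ (such a curve would be contracted by $g$) and on which $\xi|_G$ is numerically a pull-back from $Y$, is a section of $f$; correspondingly $Z\cong\PP^r$ and $g|_{X_y}$ is an isomorphism for every $y$ — here one may invoke Lazarsfeld's theorem on finite surjective morphisms from projective space, together with a ramification analysis of the finite cover $g^{-1}(z)\to Y$ over the simply connected base $Y$ (the base of a Fano $\PP^r$-bundle is again Fano). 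Granting this, $\phi\colon X \to Y\times Z \cong Y\times\PP^r$ is a finite birational morphism onto a smooth variety, hence an isomorphism, and it identifies $f$ with the first projection; thus $X\cong \PP^r\times Y$.
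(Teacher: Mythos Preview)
The paper does not supply a proof of this proposition: it is quoted verbatim as \cite[Lemma~4.1]{NO} and used as a black box. So there is no ``paper's own proof'' to compare against beyond the citation.

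On the merits of your sketch: the first half is solid and is exactly the standard opening --- the product map $\phi=(f,g)$ is finite and surjective, and after Stein factorisation one may assume $Z$ normal and $g$ with connected fibres. The difficulty, as you yourself flag, is the passage from ``finite'' to ``isomorphism'', and here your outline has genuine gaps. First, Lazarsfeld's theorem characterising $\PP^r$ among targets of finite surjections from $\PP^r$ requires the target to be \emph{smooth}; after Stein factorisation $Z$ is only normal, so you cannot invoke it directly to conclude $Z\cong\PP^r$. Second, the ``ramification analysis'' you allude to would need $f|_{G}\colon G\to Y$ to be \'etale (so that simple connectedness of the Fano base $Y$ forces degree~$1$); but from $K_G=K_X|_G$ and $K_X=-(r{+}1)\xi+f^*(K_Y+c_1(\E))$ one computes the ramification divisor $R=K_G-(f|_G)^*K_Y$ to be numerically a pullback $(f|_G)^*M$ with $M=\frac{r+1}{a}D+c_1(\E)$, and nothing you have written forces $M\equiv 0$. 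In other words, the Fano hypothesis has not yet been used in a way that actually pins down the degree, and as stated the sketch does not close.

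If you want to repair this along the cited lines, one route is to exploit the Fano condition on $X$ more directly: for instance, argue that the second contraction of $X$ (which exists since $\rho_X\ge 2$) must coincide with the Stein factorisation of $g$, and then analyse the \emph{two} extremal contractions simultaneously to force $a=1$ and the triviality of $\E$; or compare $-K_X$ with $\phi^*(-K_{Y\times Z})$ once you have controlled the singularities of $Z$. Either way, the crux --- why the degree of $\phi$ is $1$ --- still needs an honest argument, and that is precisely what \cite[Lemma~4.1]{NO} supplies.
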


\begin{pro}[See {\cite[Proposition~5.1]{NO}} and {\cite[Proposition~2.4]{BCDD}}]\label{NO} Let $X$ be a Fano manifold of dimension $n$ and pseudoindex $\geq 2$ which has only contractions of fiber type. 
Then $\rho_X \leq n$. Moreover, 
\begin{enumerate}
\item if $\rho_X=n$, then $X=(\PP^1)^n$;
\item if $\rho_X=n-1,$ then $X$ is either $(\PP^1)^{n-2} \times \PP^2$ or $X=(\PP^1)^{n-3} \times \PP(T_{\PP^2})$.
\end{enumerate}
\end{pro}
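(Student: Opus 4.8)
The plan is to induct on $\rho_X$, exploiting that every contraction is of fiber type and that the pseudoindex hypothesis forces strong restrictions on the target. First I would pick a $K_X$-negative extremal ray $R$ with contraction $f\colon X\to Y$; since $f$ is of fiber type, its general fiber $F$ is a Fano manifold with pseudoindex $\geq 2$ (rational curves on $F$ are rational curves on $X$), and by deformation/bend-and-break considerations on the fiber one gets $\dim F \geq i_F - 1 \geq 1$. The key numerical input is the inequality relating $\rho_X$, $\rho_Y$ and the relative Picard number: here $\rho_X = \rho_Y + 1$ because $R$ generates a single ray contracted by $f$. Meanwhile $Y$ is again Fano (image of a Fano under a contraction), and one checks that $Y$ inherits the property of having only fiber-type contractions with pseudoindex $\geq 2$: a birational or low-index contraction of $Y$ would pull back to one of $X$ via the composition $X\to Y\to Y'$, contradicting the hypothesis on $X$ once we know $f$ does not interfere. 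This gives the bound $\rho_X = \rho_Y + 1 \leq (\dim Y) + 1 \leq \dim X$ by induction, after handling the base case $\rho_X = 1$ (trivial) and checking the dimension count $\dim Y \leq \dim X - 1$ together with $\dim F \geq 1$.

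For the classification in the extremal and near-extremal cases, I would analyze which fibers are forced. If $\rho_X = n$, then at each stage the inequality $\rho_Y \leq \dim Y$ must be an equality and $\dim F = 1$, so $F \cong \PP^1$ (the only Fano curve), and $f$ is a smooth $\PP^1$-fibration by the pseudoindex $\geq 2$ condition (an extremal ray of length $2$ whose fibers have dimension $1$); peeling off these one at a time and using that a $\PP^1$-bundle over $(\PP^1)^{n-1}$ which is Fano with only fiber-type contractions must be the trivial product (otherwise a second contraction would produce the wrong fiber or a birational map — this is exactly the kind of rigidity supplied by Proposition~\ref{NO2}), one concludes $X \cong (\PP^1)^n$. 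For $\rho_X = n-1$, the defect of $1$ in the Picard-number bound must be "spent" at exactly one stage: either some fiber has dimension $2$ (and then, being Fano of pseudoindex $\geq 2$ and Picard number $1$, it is $\PP^2$), or some $\PP^1$-fibration fails to be a product and instead is the nontrivial one over $\PP^1$, namely $\PP(T_{\PP^2})$ regarded as a fibration. Tracking the two cases and using Proposition~\ref{NO2} repeatedly to split off the trivial $\PP^1$-factors yields the two listed possibilities $(\PP^1)^{n-2}\times\PP^2$ and $(\PP^1)^{n-3}\times\PP(T_{\PP^2})$.

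The main obstacle I anticipate is the rigidity step: showing that once a product factor $(\PP^1)^k$ has appeared, the remaining contractions genuinely split it off as a direct product rather than twisting it into a nontrivial bundle. This is where Proposition~\ref{NO2} is essential — one needs to produce, for a given $\PP^r$-bundle structure, a second morphism of fiber type onto a base of dimension exactly $r$ not contracting the relevant ray, which forces the product decomposition. Verifying that such a complementary contraction exists (by counting Picard numbers and using that all contractions are of fiber type, so the various extremal rays "fill up" $N_1(X)$) and that it has the right target dimension is the delicate bookkeeping at the heart of the argument; the rest is the numerically-driven induction sketched above. Since both cited sources (\cite{NO} and \cite{BCDD}) already carry this out, I would follow their treatment, and I will not reproduce the details here.
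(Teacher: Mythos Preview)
The paper does not supply its own proof of this proposition; it is quoted from \cite{NO} and \cite{BCDD}, and the Remark immediately following it explains how a gap in \cite[Lemma~2.13]{NO}---the passage from smooth $\PP^1$-fibrations to genuine $\PP^1$-bundles---is repaired via the Brauer-group argument of Proposition~\ref{Br}. So the only thing to compare against is the argument of \cite{NO} as described in that Remark, and your sketch departs from it in a way that introduces a real gap.

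The approach in \cite{NO} is not an induction on $\rho_X$: one works directly with all the elementary contractions $\varphi_j\colon X \to Y_j$ of $X$ itself, first identifies the targets $Y_j$, then shows that the $\varphi_j$ with one-dimensional fibers are projectivizations of rank-$2$ bundles (this is precisely where Proposition~\ref{Br} enters, once each $Y_j$ is known to be rational), and finally uses Proposition~\ref{NO2} to split off product factors. Your outline instead passes to the target $Y$ of a single contraction and asserts that $Y$ again has pseudoindex $\geq 2$ and only fiber-type contractions, so that the induction runs. Neither inheritance is justified in the generality of the statement: a rational curve on $Y$ need not lift to one on $X$, so $i_Y\geq 2$ does not follow from $i_X\geq 2$; and a birational elementary contraction $Y\to Y'$ pulls back only to the contraction of a two-dimensional face of $\overline{NE}(X)$, not to an elementary contraction of $X$, so the fiber-type hypothesis on $X$ gives no immediate contradiction. (Under the additional nef-tangent-bundle assumption these inheritances \emph{do} hold, by Theorem~\ref{sm} and Lemma~\ref{pi}, but Proposition~\ref{NO} is stated without it.) Since you ultimately defer to the cited sources anyway, be aware that what they actually carry out is the direct argument above, not the induction you sketch.
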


\begin{rem} \rm The above result {\cite[Proposition~5.1]{NO}}  was obtained by applying {\cite[Lemma~2.13]{NO}}. As one of referees pointed out to the author, the proof of {\cite[Lemma~2.13]{NO}} contains a gap. To be more precise, it is based on a result in algebraic topology due to A. Borel \cite[Expos$\rm \acute{e}$ IX, Remark 2 after Theorem 6]{Borel}, and it seems that the Borel's proof works when both $H^{\ast}(F, \ZZ)$ and $H^{\ast}(B, \ZZ)$ are torsion-free. 

However it does not affect {\cite[Proposition~5.1]{NO}}. Under the notation as in {\cite[Proposition~5.1]{NO}}, the Borel's result was applied to prove that every elementary contraction $\varphi_j: X \rightarrow Y_j$ with one-dimensional fibers is given by the projectivization of a rank $2$ vector bundle. Without using \cite[Expos$\rm \acute{e}$ IX, Remark 2 after Theorem 6]{Borel}, by the same way as in the proof of {\cite[Proposition~5.1]{NO}}, we see that $Y_j \cong (\PP^1)^{n-3} \times \PP^2$ or $(\PP^1)^{n-4} \times \PP(T_\PP^2)$. Then it follows from Proposition~\ref{Br} below that $\varphi_j$ is a $\PP^1$-bundle. As a consequence, we obtain {\cite[Proposition~5.1]{NO}} by the same argument.     
\end{rem}

\begin{pro}\label{Br} Let $f: X \rightarrow Y$ be a smooth $\PP$-fibration over a projective manifold $Y$. If $Y$ is rational or a curve, then there exists a rank $(d+1)$ vector bundle $\E$ on $Y$ such that $X=\PP_Y(\E)$. 
\end{pro}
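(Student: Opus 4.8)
The plan is to recognize $f$ as a Brauer--Severi scheme over $Y$ and to identify the obstruction to it being the projectivization of a vector bundle with a class in the Brauer group of $Y$, which vanishes under both hypotheses on $Y$.

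First I would note that $f$ is \'etale-locally trivial: since $H^1(\PP^d, T_{\PP^d}) = 0$ the fiber $\PP^d$ is infinitesimally rigid, and its automorphism group scheme $\mathrm{Aut}(\PP^d) = \mathrm{PGL}_{d+1}$ is smooth, so a smooth proper morphism all of whose fibers are isomorphic to $\PP^d$ becomes trivial after a surjective \'etale base change and is thus classified by a class $\alpha \in H^1_{\mathrm{et}}(Y, \mathrm{PGL}_{d+1})$. The central extension $1 \to \mathbb{G}_m \to \mathrm{GL}_{d+1} \to \mathrm{PGL}_{d+1} \to 1$ yields a boundary map $\delta\colon H^1_{\mathrm{et}}(Y, \mathrm{PGL}_{d+1}) \to H^2_{\mathrm{et}}(Y, \mathbb{G}_m)$, and $\alpha$ lifts to $H^1_{\mathrm{et}}(Y, \mathrm{GL}_{d+1})$ --- equivalently $X \cong \PP_Y(\E)$ for a locally free sheaf $\E$ of rank $d+1$ --- precisely when $\delta(\alpha) = 0$. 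Concretely, the low-degree exact sequence of the Leray spectral sequence for $\mathbb{G}_m$ along $f$, together with $f_* \mathbb{G}_m = \mathbb{G}_m$ and $R^1 f_* \mathbb{G}_m = \ZZ$ (the relative Picard sheaf, generated fiberwise by $\O_{X_y}(1)$), shows that $\delta(\alpha)$ is the image of $1 \in H^0(Y, \ZZ)$ in $H^2_{\mathrm{et}}(Y, \mathbb{G}_m)$; if this class vanishes there is a line bundle $L$ on $X$ with $L|_{X_y} \cong \O(1)$, and then $\E := f_* L$ is locally free of rank $d+1$ by cohomology and base change and the canonical morphism $X \to \PP_Y(\E)$ is an isomorphism, as one checks fiberwise.

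Next I would show that $H^2_{\mathrm{et}}(Y, \mathbb{G}_m) = 0$. Since $Y$ is smooth and projective over $\CC$, this group is torsion and injects into $\mathrm{Br}(\CC(Y))$, so it coincides with the Brauer group $\mathrm{Br}(Y)$. If $Y$ is a curve, then $\CC(Y)$ is a $C_1$-field, hence $\mathrm{Br}(\CC(Y)) = 0$ by Tsen's theorem and so $\mathrm{Br}(Y) = 0$. If $Y$ is rational, then the Brauer group of a smooth projective variety is a birational invariant, so $\mathrm{Br}(Y) \cong \mathrm{Br}(\PP^{\dim Y}) = 0$; the vanishing $\mathrm{Br}(\PP^n) = 0$ follows for instance from the exponential sequence and $H^2(\PP^n, \O) = H^3(\PP^n, \ZZ) = 0$. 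In either case $\delta(\alpha) = 0$, and the previous paragraph gives the assertion.

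The main obstacle is not any single deep computation but the careful assembly of these standard inputs: that $f$ really is a Brauer--Severi scheme (\'etale-local triviality of smooth $\PP^d$-fibrations), that the obstruction to upgrading such a scheme to a projectivized vector bundle is exactly its Brauer class, and the vanishing $\mathrm{Br}(Y) = 0$ --- immediate from Tsen in the curve case, and resting on birational invariance of the (unramified) Brauer group in the rational case. Note that in every place this proposition is applied below, $Y$ is in fact a very explicit rational variety (a product of projective spaces, a quadric, or $\PP(T_{\PP^2})$), for which $\mathrm{Br}(Y) = 0$ is classical.
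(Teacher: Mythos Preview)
Your proof is correct and follows essentially the same route as the paper: identify the obstruction to lifting the $\mathrm{PGL}_{d+1}$-torsor to a $\mathrm{GL}_{d+1}$-torsor as a class in $H^2_{\mathrm{\acute{e}t}}(Y,\mathbb{G}_m)$, then kill it using birational invariance of the Brauer group in the rational case and the classical vanishing for curves. You supply more detail than the paper on two points it leaves implicit --- why a smooth $\PP^d$-fibration is \'etale-locally trivial, and the Tsen/exponential-sequence arguments for $\mathrm{Br}(Y)=0$ --- but the strategy is identical.
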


\begin{proof} Consider an exact sequence of algebraic groups over $Y$:
\begin{eqnarray}
1 \rightarrow \GG_m \rightarrow GL(d+1) \rightarrow PGL(d) \rightarrow 1. \nonumber
\end{eqnarray}
Then we have an exact sequence of $\rm \acute{e}$tale cohomologies:
\begin{eqnarray}\label{exact}
 H^1_{{\it {\acute{e}}t}}(Y, GL(d+1)) \rightarrow H^1_{{\it {\acute{e}}t}}(Y, PGL(d)) \rightarrow H^2_{{\it {\acute{e}}t}}(Y, \GG_m).
\end{eqnarray}

Here ${\rm Br}'(Y):=H^2_{\it {\acute{e}}t}(Y, \GG_m)$ is called the {\it Brauer-Grothendieck group} of $Y$. The Brauer-Grothendieck group is birational invariant of complex projective manifolds \cite[III, Corollary~7.3]{Gr}. Furthermore, it is well-known that ${\rm Br}'(Y)$ is trivial when $Y$ is a complex projective space or a curve. Hence, in the above sequence (\ref{exact}), the first arrow is surjective. 
 
On the other hand, a smooth $\PP$-fibration $f$ defines a cocycle $[f] \in H^1_{{\it {\acute{e}}t}}(Y, PGL(d))$. Then $f$ is given by the projectivization of a vector bundle if and only if there exists a preimage of $[f]$ in $H^1_{{\it {\acute{e}}t}}(Y, GL(d+1))$. Since the first arrow of the above sequence (\ref{exact}) is surjective, we obtain our assertion.    
\end{proof}

\begin{pro}[{\cite[Theorem~2]{OW}}]\label{WO} Let $X$ be a projective manifold of dimension $n$, endowed with two different smooth $\PP$-fibration structures $f: X \rightarrow Y$ and $g: X \rightarrow Z$ such that $\dim Y + \dim Z = n+1$. Then either $n=2m-1$, $Y =Z=\PP^m$ and $X=\PP(T_{\PP^m})$ or $Y$ and $Z$ have a $\PP$-bundle structure over a smooth curve $C$ and $X = Y \times_C Z$.
\end{pro}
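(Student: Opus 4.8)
The plan is to study the product morphism $\Phi := (f,g)\colon X \to Y \times Z$ and to show that it realizes $X$ as a suitable hypersurface in $Y \times Z$.

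First I would note that, $f$ and $g$ being distinct $\PP$-fibrations, the extremal rays $R_f$ and $R_g$ of $\overline{NE}(X)$ they contract are distinct, so no irreducible curve on $X$ is contracted by both. Writing $a = \dim Y$ and $b = \dim Z$ (so $a+b = n+1$, a fibre of $f$ is a $\PP^{b-1}$ and a fibre of $g$ a $\PP^{a-1}$), this forces the restriction of $g$ to any fibre $F$ of $f$ to be finite, and symmetrically for $f$ on the fibres of $g$; since the fibre of $\Phi$ over $(y,z)$ is $f^{-1}(y)\cap g^{-1}(z)$, the morphism $\Phi$ is finite onto its image $W := \Phi(X)$, a hypersurface in $Y\times Z$. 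Next I would argue that $\Phi$ is an isomorphism onto $W$: the intersection $T_{X/Y}\cap T_{X/Z}$ vanishes (corresponding fibres meet in dimension zero), so $\Phi$ is unramified, hence \'etale onto the necessarily smooth $W$; and as every fibre of $\Phi$ lies in a single fibre of $f$, on which $\Phi$ is injective, $\Phi$ has degree one. We may therefore assume $X = W\subset Y\times Z$ is a smooth hypersurface on which the two projections $p = \mathrm{pr}_Y|_W$ and $q = \mathrm{pr}_Z|_W$ realize $f$ and $g$.

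I would then study $W$ through its fibres. For $z\in Z$ the fibre $q^{-1}(z) = Y_z\times\{z\}$ picks out a subvariety $Y_z\cong\PP^{a-1}$ of $Y$, and as $z$ varies these cover $Y$; symmetrically the fibres of $p$ give a covering family of subvarieties $Z_y\cong\PP^{b-1}$ of $Z$. The dichotomy in the statement reflects whether two general members of these families are disjoint. If general $Y_z,Y_{z'}$ are disjoint, the family $\{Y_z\}$ defines a fibration $Y\to B$ over a curve whose fibres are the $Y_z$, which by Proposition~\ref{Br} is a $\PP^{a-1}$-bundle; unwinding the definition of $W$ then gives $W = Y\times_B Z$, and the symmetric reasoning makes $Z\to B$ a $\PP^{b-1}$-bundle, which is the second alternative with $C=B$. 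If instead general members meet, then the $Z_y$ are a covering family of divisors on $Z$, each isomorphic to $\PP^{b-1}$, with general members meeting in codimension two; from this one deduces, by a characterization of projective space of Cho--Miyaoka--Shepherd-Barron or Kobayashi--Ochiai type, that $Z\cong\PP^b$ with the $Z_y$ hyperplanes. The assignment $y\mapsto Z_y$ then yields a morphism $Y\to(\PP^b)^{\vee}$, which a dimension count forces to be an isomorphism onto $(\PP^a)^{\vee}$ with $a=b=:m$ and $n=2m-1$; so $W$ is the incidence variety, i.e. $X=\PP(T_{\PP^m})$.

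The step I expect to be the main obstacle is this last classification: passing from ``a covering family of $\PP^{b-1}$'s whose general members meet in codimension two'' to ``$Z=\PP^b$ with the $Z_y$ hyperplanes'' requires genuine work, since one must extract and control the auxiliary family of rational curves (or the normal bundle data) on $Z$, apply the appropriate characterization of $\PP^n$, and rule out that the $Z_y$ are higher-degree subvarieties abstractly isomorphic to a projective space --- a phenomenon that can occur a priori only in low dimension (plane conics) and must be excluded using the smoothness of the complementary fibration $q$. A secondary technical point is the reduction in the first paragraph, i.e. checking carefully that $\Phi$ is unramified of degree one so that $X\cong W$; this rests on the transversality of corresponding general fibres and on the vanishing of $T_{X/Y}\cap T_{X/Z}$.
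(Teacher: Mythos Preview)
The paper does not actually prove this proposition: its proof consists of the sentence ``See \cite[Theorem~2]{OW}'' together with the remark (via Proposition~\ref{Br}) that a smooth $\PP$-fibration over a curve is already a $\PP$-bundle, so that the conclusion can be phrased as stated. There is therefore nothing in the paper to compare your argument against; you have sketched a proof of the cited result itself.

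Your outline is broadly in the spirit of the original \cite{OW} argument, and you correctly identify the $\PP^n$-characterisation step as the crux. One point deserves more care than you give it: you claim $\Phi$ has degree one because ``on each fibre of $f$, $\Phi$ is injective'', but on $f^{-1}(y)$ the map $\Phi$ is $(y,\,g|_{f^{-1}(y)})$, and you have only shown $g|_{f^{-1}(y)}\colon \PP^{b-1}\to Z$ is \emph{finite}, not injective---a priori it could be a branched cover of its image. Establishing that the image is itself a $\PP^{b-1}$ and that the map is an isomorphism is essentially the Remmert--Van de Ven/Lazarsfeld step, so this gap is intertwined with the obstacle you already flag rather than a separate ``secondary technical point''. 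Likewise, your unramifiedness argument (``$T_{X/Y}\cap T_{X/Z}=0$ because fibres meet in dimension zero'') conflates set-theoretic and scheme-theoretic intersection: zero-dimensional intersection of smooth subvarieties does not by itself force transversality at every point, so this too needs a genuine argument.
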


\begin{proof} See {\cite[Theorem~2]{OW}}. According to Proposition~\ref{Br}, a smooth $\PP$-fibration over a curve is a $\PP$-bundle.
\end{proof}

\section{Fano manifolds with nef tangent bundles}

\begin{them}[{See \cite[Theorem~4.2]{Hwang}}]\label{4} Let $X$ be a Fano manifold with nef tangent bundle of dimension $n \leq 4$. Then the following holds.
\begin{enumerate}
\item If $n=1$, then $X$ is $\PP^1$.
\item If $n=2$, then $X$ is $\PP^2$ or $(\PP^1)^2$.
\item If $n=3$, then $X$ is  one of the following: \\
$\PP^3$, $Q^3$, $\PP^1 \times \PP^2$, $\PP(T_{\PP^2})$, $(\PP^1)^3$.
\item If $n=4$, then $X$ is  one of the following: \\
$\PP^4$, $Q^4$, $\PP^1 \times \PP^3$, $\PP^1 \times Q^3$, $(\PP^2)^2$, $\PP(\N)$, where $\N$ is the null-correlation bundle over $\PP^3$ (see Example~\ref{spe} below), $(\PP^1)^2 \times \PP^2$, $\PP^1 \times \PP(T_{\PP^2})$, $(\PP^1)^4$.
\end{enumerate}
\end{them}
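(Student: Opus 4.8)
The plan is to argue by induction on $n=\dim X$, treating $\rho_X>1$ and $\rho_X=1$ separately, with the second case the genuinely hard one. For $n=1$ the only Fano manifold is $\PP^1$, and for $n\le 3$ the statement follows from the classification of low-dimensional Fano manifolds together with Campana--Peternell's analysis in \cite{CP}; the new content is $n=4$, where I would handle $\rho_X>1$ following \cite{CP2} and $\rho_X=1$ following Hwang \cite{Hwang}. So assume the classification is known in all dimensions $<n$ for $n\le 4$, and let $X$ be a Fano $n$-fold with $T_X$ nef.

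Suppose first $\rho_X>1$. Fixing a contraction $f\colon X\to Y$ of a $K_X$-negative extremal ray, Theorem~\ref{sm} says that $f$ is a smooth fibration, in particular of fiber type, and that both $Y$ and a fiber $X_y$ are Fano manifolds with nef tangent bundle; since $\dim Y,\dim X_y<n$, the induction hypothesis applies to them. By Proposition~\ref{Casa}, $\rho_{X_y}=1$, so inspecting the lists in dimensions $\le 3$ shows $X_y$ is a projective space $\PP^m$ or a quadric $Q^m$; thus $X$ is a smooth $\PP^m$- or $Q^m$-fibration over a rational homogeneous manifold $Y$, and as $\rho_X>1$ it carries at least two such structures. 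When $X_y=\PP^m$, Proposition~\ref{Br} identifies $f$ with a $\PP^m$-bundle; combining two distinct $\PP$-bundle structures via Proposition~\ref{WO} and eliminating the leftover configurations with Propositions~\ref{NO} and \ref{NO2}, one is reduced to products of projective spaces, the shapes $(\PP^1)^{k}\times\PP^2$, $\PP(T_{\PP^2})$ and its products with $(\PP^1)^k$, and — only in dimension $4$ — the bundle $\PP(\N)$ over $\PP^3$; for each candidate one then checks directly (the examples of Section~3, and \cite{CP2}) whether $T_X$ is nef, which is exactly what retains the members appearing in the list. The case in which some fiber is a quadric $Q^m$ with $m\ge 3$ is more rigid: the same bundle-theoretic analysis forces $X\cong\PP^1\times Q^m$ with $m\le 3$, yielding $\PP^1\times Q^3$.

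The remaining case $\rho_X=1$ is where the main obstacle lies: one must show $X\cong\PP^n$ or $Q^n$. Since $T_X$ is nef, every rational curve on $X$ is free, so for a general member $C$ of a minimal rational family there is a splitting $T_X|_C\cong\O_{\PP^1}(2)\oplus\O_{\PP^1}(1)^{\oplus p}\oplus\O_{\PP^1}^{\oplus(n-1-p)}$, where $p=\dim\C_x$ is the dimension of the variety of minimal rational tangents at a general point; hence $i_X=p+2\ge 2$. If $p\ge n-2$ then $i_X\ge n$, and the Cho--Miyaoka--Shepherd-Barron characterization of projective space and quadrics \cite{CMSB} (with \cite{Mi}) gives $X\cong\PP^n$ when $i_X\ge n+1$ and $X\cong Q^n$ when $i_X=n$. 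The hard part will be to exclude $p<n-2$: one must analyze a low-dimensional variety of minimal rational tangents using Mok's rigidity theory for Hermitian symmetric spaces \cite{Mok} together with \cite{Mi} to contradict $\rho_X=1$, and for $n\le 4$ this is precisely the argument of \cite[Theorem~4.2]{Hwang}. For $n=3$ one can instead bypass the VMRT machinery by invoking the explicit classification of Fano threefolds of Picard number one and verifying nefness of $T_X$ case by case, which again leaves only $\PP^3$ and $Q^3$.
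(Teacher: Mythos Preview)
Your proposal and the paper's proof agree on overall strategy: both assemble the theorem from the literature, deferring to \cite{CP} for $n\le 3$, to \cite{CP2} for $n=4$ with $\rho_X>1$, and to \cite{CMSB}, \cite{Mi}, \cite{Hwang} for $n=4$ with $\rho_X=1$. The paper is terser---almost pure citation---and its single original observation is that the list in \cite[Theorem~3.1]{CP2} mistakenly includes $\PP(T_{\PP^2})\times_{\PP^2}\PP(T_{\PP^2})$, whose tangent bundle is in fact \emph{not} nef (Lemma~\ref{non}); this item must be struck. You allude to such a check (``for each candidate one then checks directly \dots\ whether $T_X$ is nef'') but do not name the offending variety, which is the one point worth making explicit here.

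Where your outline is looser is the $n=4$, $\rho_X>1$ case. You propose to reach the candidate list---including $\PP(\N)$---by combining Propositions~\ref{WO}, \ref{NO} and \ref{NO2}. But Proposition~\ref{WO} requires $\dim Y+\dim Z=n+1$, while $\PP(\N)$ has both extremal contractions landing on threefolds ($\PP^3$ and $Q^3$), giving $\dim Y+\dim Z=6\ne 5$; and Propositions~\ref{NO} and \ref{NO2} handle the large-Picard-number and product-splitting situations, so they do not produce $\PP(\N)$ either. What actually isolates $\PP(\N)$ in \cite{CP2} (and, analogously, in the paper's own Proposition~\ref{fb} for dimension~$5$) is the classification of rank-$2$ Fano bundles on $\PP^3$ and $Q^3$ due to \cite{SW} and \cite{APW}. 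This is misattribution of tools rather than a wrong idea, and the paper sidesteps it entirely by citing \cite{CP2}; but if you want to sketch the argument rather than cite it, that Fano-bundle classification is the missing ingredient.
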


\begin{proof} When $n \leq 2$, it is easy to prove our assertion. When $n=3$, this is in \cite[Theorem~5.1, Theorem~6.1]{CP}. Of course, this also follows from the classification theory of Fano manifolds of $n \leq 3$. If $n=4$ and $\rho_X > 1$, then our assertion is dealt in \cite[Theorem~3.1]{CP2}. However we should remark that the tangent bundle of  $\PP(T_{\PP^2})\times_{\PP^2} \PP(T_{\PP^2})$, which is listed in \cite[Theorem~3.1~(4)-(d)]{CP2}, is not nef, see Lemma~\ref{non} below. If $n=4$ and $\rho_X =1$, we see that $X$ is isomorphic to $\PP^4$ or $Q^4$. This follows from \cite{CMSB}, \cite{Mi} and \cite[Theorem~4.3]{Hwang} (see also Section~\ref{1}).  

\end{proof}

\begin{lem}\label{pi} Let $X$ be a Fano manifold with nef tangent bundle. Then the pseudoindex of $X$ is at least $2$.
\end{lem}

\begin{proof} Let $C$ be a rational curve on $X$ and $f: \PP^1 \rightarrow C \subset X$ its normalization. Since $T_X$ is nef, so is $f^{\ast}T_X$. This implies that $f^{\ast}T_X \cong \bigoplus^n_{i=1} {\mathscr{O}}_{\PP^1}(a_i)$, where $a_i \geq 0$. Furthermore we have an injection ${\mathscr{O}}_{\PP^1}(2) \rightarrow f^{\ast}T_X$. This implies that $a_i \geq 2$ for some $i$. Consequently, $-K_X.C= \sum a_i \geq 2$. This means the pseudoindex of $X$ is at least $2$.

\end{proof}

\begin{lem}\label{non} The tangent bundle of $\PP(T_{\PP^2})\times_{\PP^2} \PP(T_{\PP^2})$ is not nef.
\end{lem}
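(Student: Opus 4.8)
The plan is to exhibit a rational curve $C$ on $W := \PP(T_{\PP^2}) \times_{\PP^2} \PP(T_{\PP^2})$ along which $T_W$ restricts to a bundle with a negative summand, which contradicts nefness. First I would set up coordinates: let $\pi : W \to \PP^2$ be the structure map, with the two projections $p_1, p_2 : W \to \PP(T_{\PP^2})$; note $\dim W = 4$ and the fibers of $\pi$ are $\PP^1 \times \PP^1$. The key is the relative tangent sequence $0 \to T_{W/\PP^2} \to T_W \to \pi^* T_{\PP^2} \to 0$, together with the fact that $T_{\PP^2}|_\ell \cong \O_{\PP^1}(2) \oplus \O_{\PP^1}(1)$ for a line $\ell \subset \PP^2$.

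The curve to use is a minimal section over a line. Fix a line $\ell \subset \PP^2$; then $\PP(T_{\PP^2})|_\ell = \PP(\O_\ell(2)\oplus\O_\ell(1)) = \FF_1$, the Hirzebruch surface, which contains a section $\sigma$ of negative self-intersection corresponding to the quotient $\O_\ell(2)\oplus\O_\ell(1) \twoheadrightarrow \O_\ell(1)$. Take $C$ to be the curve in $W|_\ell = \FF_1 \times_\ell \FF_1$ given by the diagonal-type section: $C$ maps isomorphically to $\ell$ via $\pi$, and both $p_i(C)$ are the negative section of the corresponding $\FF_1$. I would then compute $T_W|_C$ by restricting the relative tangent sequence, writing $T_{W/\PP^2} = p_1^* T_{\PP(T_{\PP^2})/\PP^2} \oplus p_2^* T_{\PP(T_{\PP^2})/\PP^2}$ and evaluating each factor along the negative section of $\FF_1$; the relative tangent bundle of a $\PP^1$-bundle restricted to a section $s$ corresponding to a rank-one quotient $\E \twoheadrightarrow \L$ is $\mathcal{H}om(\ker, \L) = \L \otimes (\det \E)^{-1} \otimes \L = \L^{\otimes 2} \otimes (\det\E)^{-1}$, which here gives $\O_\ell(1)^{\otimes 2} \otimes \O_\ell(-3) = \O_{\PP^1}(-1)$. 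So $T_{W/\PP^2}|_C$ already contains $\O_{\PP^1}(-1) \oplus \O_{\PP^1}(-1)$, and since $\pi^*T_{\PP^2}|_C = \O_{\PP^1}(2)\oplus\O_{\PP^1}(1)$ has no negative summands, the extension $T_W|_C$ must have $h^0(T_W|_C(-1)) < \dim W$ forcing a negative summand; concretely $\deg(T_W|_C) = -2 + 3 = 1 < 4$ is not by itself enough, so instead I would argue that the surjection $T_W|_C \twoheadrightarrow (\text{a negative quotient})$ is obtained by composing $T_W|_C \to \pi^*T_{\PP^2}|_C$...

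Actually the cleaner route: restrict further to $W|_\ell$ and use that $T_{W|_\ell}|_C$ is a quotient-free subsheaf situation — better, pull back everything to $C \cong \PP^1$ and note that a nef bundle on $\PP^1$ is globally generated, so it suffices to show $T_W|_C$ is not globally generated, equivalently that $T_W|_C \otimes \O_{\PP^1}$ has a trivial-or-negative piece detected by the normal bundle $N_{C/W}$: from $0 \to T_C \to T_W|_C \to N_{C/W} \to 0$ with $T_C = \O_{\PP^1}(2)$, it is enough that $N_{C/W}$ has a negative summand, and $N_{C/W}$ fits in $0 \to N_{C/W|_\ell} \to N_{C/W} \to N_{W|_\ell/W}|_C = \pi^*N_{\ell/\PP^2}|_C = \O_{\PP^1}(1) \to 0$ while $N_{C/W|_\ell}$, the normal bundle of the product of negative sections inside $\FF_1 \times_\ell \FF_1$, is $\O_{\PP^1}(-1)\oplus\O_{\PP^1}(-1)$. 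Hence $N_{C/W}$ surjects onto nothing negative automatically, but it contains $\O_{\PP^1}(-1)$, and since $T_W|_C$ is an extension of $N_{C/W}$ by $\O_{\PP^1}(2)$, a diagram chase (or direct $H^1$ computation, $\mathrm{Ext}^1(\O(1),\O(2)) = 0$) shows the sub-line-bundle $\O_{\PP^1}(-1) \subset N_{C/W}$ lifts to a sub-line-bundle of $T_W|_C$ of negative degree, so $T_W|_C$ is not nef, hence $T_W$ is not nef.

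The main obstacle is bookkeeping the splitting type of $T_W|_C$ correctly: one must be careful that the two $\PP^1$-bundle factors of $\PP(T_{\PP^2})$ genuinely contribute \emph{independent} negative directions rather than one cancelling the other, and that the chosen section $C$ simultaneously realizes the negative section in both factors — this requires checking the existence of such a $C$, i.e. that the two maps $W|_\ell \to \FF_1$ admit a common negative-section lift, which follows because $W|_\ell \to \ell$ is the fiber product $\FF_1 \times_\ell \FF_1$ and one simply takes the product of the two negative sections. Once $C$ is in hand, the extension/lifting argument via vanishing of $\mathrm{Ext}^1$ on $\PP^1$ is routine. Alternatively, and perhaps most efficiently, one can cite that $T_W$ nef would force $-K_W$ and all its restrictions to curves to have the splitting-type constraints of Lemma~\ref{pi}, and then simply verify $-K_W \cdot C = \deg(T_W|_C) = 1 < 2$ for the curve $C$ above, contradicting Lemma~\ref{pi}; I would check this numerical computation as the quickest finish, using $-K_W = 2\xi_1 + 2\xi_2 + \pi^*(\text{something})$ in terms of the relative $\O(1)$'s $\xi_i$ and intersecting with $C$.
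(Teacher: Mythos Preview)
Your final ``most efficient'' route---compute $-K_W\cdot C=1$ for a specific rational curve $C$ and invoke Lemma~\ref{pi}---is exactly the paper's proof. In fact your curve $C$ (the product of the two negative sections in $\FF_1\times_\ell\FF_1$ over a line $\ell\subset\PP^2$) is the very curve the paper uses: the paper takes a fiber $l$ of the second $\PP^1$-bundle map $p_1:\PP(T_{\PP^2})\to\PP^2$ and embeds it diagonally in $W$; over $\ell=p_2(l)$ this $l$ is precisely the negative section of $\PP(T_{\PP^2}|_\ell)\cong\FF_1$, so its diagonal image is your $C$. The paper writes $-K_W=\pi^*K_{\PP^2}+\pi_1^*(-K_{\PP(T_{\PP^2})})+\pi_2^*(-K_{\PP(T_{\PP^2})})$ and evaluates $-K_W\cdot l=-3+2+2=1$, which matches your $\deg(T_W|_C)=-2+3=1$; you had in fact already computed this number before declaring it the quickest finish.

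The long normal-bundle detour, however, contains a real slip: the claim that a sub-line-bundle $\O_{\PP^1}(-1)\hookrightarrow T_W|_C$ forces $T_W|_C$ not to be nef is false. Nefness concerns \emph{quotients}, not sub-bundles; for instance $\O_{\PP^1}(-1)$ embeds as a sub-bundle of $\O_{\PP^1}^{\oplus 2}$ via two independent sections of $\O_{\PP^1}(1)$ (the cokernel is $\O_{\PP^1}(1)$, locally free), yet $\O_{\PP^1}^{\oplus 2}$ is nef. If you want to salvage that route, the fix is immediate: $N_{C/W}$ is a \emph{quotient} of $T_W|_C$, and since $N_{C/W}$ has rank $3$ and degree $-1$ on $\PP^1$ it cannot be nef, hence neither can $T_W|_C$. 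But the one-line appeal to Lemma~\ref{pi} is cleaner, and that is what the paper does.
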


\begin{proof} For $X:=\PP(T_{\PP^2})\times_{\PP^2} \PP(T_{\PP^2})$, consider the commutative diagram: \[\xymatrix{
&& X \ar[dd]_{\pi} \ar[dl]_{\pi_1} \ar[dr]^{\pi_2} && \\
&\PP(T_{\PP^2}) \ar[dl]_{p_1} \ar[dr]^{p_2}  & & \PP(T_{\PP^2}) \ar[dl]_{p_2} \ar[dr]^{p_1} & \\
\PP^2 & &\PP^2&  & \PP^2 \\
}\]

Then we have 
\begin{eqnarray} \nonumber
-K_X&=& \pi^{\ast}(-K_{\PP^2})+(-K_{X/\PP^2})\\
\nonumber &=& \pi^{\ast}(-K_{\PP^2})+\pi_1^{\ast}(-K_{\PP(T_{\PP^2})/\PP^2})+ \pi_2^{\ast}(-K_{\PP(T_{\PP^2})/\PP^2})\\ 
\nonumber &=& \pi^{\ast}(-K_{\PP^2})+\pi_1^{\ast}(-K_{\PP(T_{\PP^2})}+p_2^{\ast}(K_{\PP^2}))+ \pi_2^{\ast}(-K_{\PP(T_{\PP^2})}+p_2^{\ast}(K_{\PP^2}))\\ 
\nonumber &=& \pi^{\ast}(K_{\PP^2})+\pi_1^{\ast}(-K_{\PP(T_{\PP^2})})+ \pi_2^{\ast}(-K_{\PP(T_{\PP^2})}).
\end{eqnarray}

Let $l \subset \PP(T_{\PP^2})$ be a fiber of $p_1$. Then ${p_2}_{\ast}(l)$ is a line in $\PP^2$. Furthermore, $l$ can be regarded as a curve in $X$ via the diagonal embedding $\PP(T_{\PP^2}) \subset X$. Then 
\begin{eqnarray} \nonumber
-K_X.l= (\pi^{\ast}(K_{\PP^2})+\pi_1^{\ast}(-K_{\PP(T_{\PP^2})})+ \pi_2^{\ast}(-K_{\PP(T_{\PP^2})})).l=1.
\end{eqnarray}    
Thus, Lemma~\ref{pi} concludes that the tangent bundle of $X$ is not nef.   
\end{proof}

\begin{rem}\rm  We see that $\PP(T_{\PP^2})\times_{\PP^2} \PP(T_{\PP^2})$ is the blow-up of $\PP^2 \times \PP^2$ along the diagonal. Lemma~\ref{non} also follows from this fact (see Theorem~\ref{sm}~$\rm (i)$ below). 

\end{rem}

\begin{them}\label{sm} Let $X$ be a Fano manifold with nef tangent bundle, $f: X \rightarrow Y$ a contraction of an extremal ray and $X_y$ a fiber of $f$. Then the following holds.
\begin{enumerate}
\item $f$ is smooth, in particular, of fiber type. 
\item $Y$ is a Fano manifold with nef tangent bundle of $\rho_Y=\rho_X-1$.
\item $X_y$ is a Fano manifold with nef tangent bundle of $\rho_{X_y}=1$.
\end{enumerate}
\end{them}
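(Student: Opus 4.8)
The plan is to concentrate on part~(i); once $f$ is known to be a smooth morphism, parts~(ii) and~(iii) follow from it together with known results.

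\emph{Parts~(ii) and~(iii), granting~(i).} First, $Y$ is smooth: formally locally $\widehat{\O}_{X,x}\cong\widehat{\O}_{Y,f(x)}[[t_1,\dots,t_r]]$ with $r=\dim X-\dim Y$, and a power series ring over a local ring is regular only if the base ring is. The exact sequence $0\to T_{X/Y}\to T_X\to f^{\ast}T_Y\to 0$ (valid since $f$ is smooth) exhibits $f^{\ast}T_Y$ as a quotient of the nef bundle $T_X$, hence $f^{\ast}T_Y$ is nef, and nefness descends along the surjection $f$, so $T_Y$ is nef. Moreover $Y=f(X)$ is the image of the Fano, hence rationally connected, manifold $X$, so $Y$ is rationally connected; in particular $Y$ is simply connected with vanishing Albanese, and the Demailly--Peternell--Schneider structure theorem then forces $Y$ itself to be a Fano manifold. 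Since $f$ contracts a single extremal ray, $\rho_Y=\rho_X-1$; this proves~(ii). For~(iii): $X_y$ is smooth (fiber of a smooth morphism), and $N_{X_y/X}$ is trivial of rank $\dim Y$; from $0\to T_{X_y}\to T_X|_{X_y}\to N_{X_y/X}\to 0$ together with the elementary fact that a subbundle of a nef bundle with trivial quotient is again nef --- on any curve $C$, a quotient line bundle $L$ of $T_{X_y}|_C$ sits in an extension $0\to L\to Q\to\O_C^{\oplus\dim Y}\to 0$ with $Q$ a nef quotient of $T_X|_C$, so $\deg L=\deg Q\ge 0$ --- the bundle $T_{X_y}$ is nef. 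Since by~(i) $f$ is a smooth contraction of fiber type, Proposition~\ref{Casa} gives that $X_y$ is Fano with $\rho_{X_y}=1$.

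\emph{Part~(i), first step: $f$ is of fiber type.} Let $C$ be a rational curve generating the contracted ray $R$ and $g\colon\PP^1\to C\subset X$ its normalization. Since $T_X$ is nef, $g^{\ast}T_X\cong\bigoplus_i\O_{\PP^1}(a_i)$ with all $a_i\ge 0$, so $g^{\ast}T_X$ is globally generated; hence $C$ is a free rational curve, the evaluation map $\mathrm{Hom}(\PP^1,X)\times\PP^1\to X$ is a submersion along $\{[g]\}\times\PP^1$, and the deformations of $C$ cover a dense subset of $X$. All these deformations keep the numerical class $[C]\in R$, hence are contracted by $f$; if $f$ were birational they would all lie in its exceptional locus, which has dimension $<\dim X$ --- a contradiction. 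Therefore $f$ is of fiber type.

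\emph{Part~(i), second step: $f$ is smooth --- the main obstacle.} Here one genuinely needs that $T_X$ is nef, not merely that $-K_X$ is nef: many Fano manifolds carry birational, hence non-smooth, contractions. The target is to show that every scheme-theoretic fiber of $f$ is smooth of dimension $r$ and that $Y$ is smooth; then $f$ is a smooth morphism, finishing~(i). I would argue through the minimal dominating family $\mathcal V$ of rational curves in $R$. Denote by $\ell(R)$ the length of $R$; by Lemma~\ref{pi}, $\ell(R)\ge 2$. By nefness of $T_X$ every member of $\mathcal V$ is free, and for a general member $C$ through a general point $g^{\ast}T_X$ has the standard splitting type $\O_{\PP^1}(2)\oplus\O_{\PP^1}(1)^{\oplus(\ell(R)-2)}\oplus\O_{\PP^1}^{\oplus(\dim X-\ell(R))}$; since $f$ coincides with the rationally connected fibration associated to $\mathcal V$, the general fiber has dimension $\ell(R)-1$. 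The Ionescu--Wi\'sniewski inequality gives $\dim F\ge\ell(R)-1$ for every component $F$ of every fiber, so the delicate point is the matching upper bound together with smoothness of $F$: one must propagate the standard splitting type to every point of every fiber --- that is, rule out jumping --- by controlling the loci of the free $\mathcal V$-curves through an arbitrary point of $F$ via their normal bundles and the ampleness of $-K_X$. This uniform behaviour over all (possibly reducible or non-reduced) fibers is the crux of the statement. Alternatively, the smoothness of extremal contractions of Fano manifolds with nef tangent bundle is available in the literature and may be invoked directly.
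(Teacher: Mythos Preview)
Your proof is correct and, at bottom, follows the same route as the paper: for the hard point~(i) you end up invoking the Demailly--Peternell--Schneider smoothness theorem (the paper cites \cite[Theorem~5.2]{DPS} and \cite[Theorem~4.4]{SolW} and says nothing more), and for~(iii) you invoke Proposition~\ref{Casa} exactly as the paper does, while your exact-sequence argument that $T_{X_y}$ is nef is precisely the content of \cite[Proposition~2.11~(1)]{CP} which the paper quotes.

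The one genuine difference is in~(ii), where the paper shows that $Y$ is Fano by the direct statement ``the image of a Fano manifold under a smooth morphism is Fano'' \cite[Corollary~2.9]{KMM}, whereas you deduce it from rational connectedness of $Y$ together with the DPS structure theorem. Both are valid; the paper's citation is shorter and avoids the detour through $\pi_1$ and the Albanese, while your argument has the mild virtue of making explicit why nefness of $T_Y$ (which you have already established) forces the Fano conclusion once no abelian quotient is present. Your added ``first step'' for~(i) (that $f$ is of fiber type because the free deformations of a curve in $R$ sweep out $X$) is correct and self-contained, but it is subsumed by the DPS citation, so it is extra rather than different.
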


\begin{proof} $\rm (i)$ This is in \cite[Theorem~5.2]{DPS} (see also \cite[Theorem~4.4]{SolW}). 

$\rm (ii)$ An image of a Fano manifold by a smooth morphism is again Fano (see \cite[Corollary~2.9]{KMM}). Furthermore, it follows from \cite[Proposition~2.11 (2)]{CP} that $T_Y$ is nef. 

$\rm (iii)$ From Proposition~\ref{Casa}, it follows that $X_y$ is a Fano manifold of $\rho_{X_y}=1$. Moreover \cite[Proposition~2.11 (1)]{CP} implies that $T_{X_y}$ is nef. 
\end{proof}

\begin{pro}\label{normal} Let $X$ be a Fano manifold with nef tangent bundle, $f: X \rightarrow Y$ a contraction of an extremal ray and $F$ a projective submanifold of $Y$ whose normal bundle is trivial, i.e., $N_{F/Y} \cong {\mathscr{O}}_F^{\oplus l}$. Then the preimage $W:=f^{-1}(F)$ is a Fano manifold with nef tangent bundle. 
\end{pro}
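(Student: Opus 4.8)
The plan is to deduce everything from Theorem~\ref{sm}(i), which guarantees that $f$ is a smooth morphism, together with the hypothesis that $N_{F/Y}$ is trivial. First I would record the elementary structural facts. Since $f$ is smooth and $F\subset Y$ is a smooth submanifold, the preimage $W=f^{-1}(F)$ is a smooth closed subvariety of $X$, the restriction $f|_W\colon W\to F$ is smooth with the same fibers as $f$, and — as the fibers of the contraction $f$ are connected — $W$ is connected, hence a projective manifold. Because the defining square is Cartesian and $f$ is flat, base change for conormal sheaves gives $N_{W/X}\cong (f|_W)^*N_{F/Y}\cong (f|_W)^*\O_F^{\oplus l}\cong \O_W^{\oplus l}$.

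Granting this, the Fano property is immediate: by adjunction $\omega_W=\omega_X|_W\otimes\det N_{W/X}$, and $\det N_{W/X}=\O_W$, so $-K_W=(-K_X)|_W$. Since $-K_X$ is ample and $W\subset X$ is closed, $-K_W$ is ample, i.e. $W$ is a Fano manifold. The remaining, and only substantive, point is to show that $T_W$ is nef. For this I would start from the normal bundle sequence
\[
0\longrightarrow T_W\longrightarrow T_X|_W\longrightarrow N_{W/X}\cong\O_W^{\oplus l}\longrightarrow 0 ,
\]
in which the middle term is nef. Note that one cannot simply invoke ``$T_X$ nef $\Rightarrow T_W$ nef'', since a subbundle of a nef bundle need not be nef; the triviality of the quotient $N_{W/X}$ has to be used. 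The idea is to test nefness of $T_W$ on curves: it suffices to show that for every morphism $\nu\colon\tilde C\to W$ from a smooth projective curve, every quotient line bundle $L$ of $\nu^*T_W$ has $\deg L\ge 0$. Pulling the displayed sequence back by $\nu$ keeps the middle term nef and keeps the quotient trivial; given a surjection $\nu^*T_W\twoheadrightarrow L$ with kernel $A$ (a subbundle, as $L$ is locally free), the bundle $\nu^*(T_X|_W)/A$ fits into
\[
0\longrightarrow L\longrightarrow \nu^*(T_X|_W)/A\longrightarrow \O_{\tilde C}^{\oplus l}\longrightarrow 0 ,
\]
so it is a quotient of a nef bundle on a curve and hence has nonnegative degree; but its degree is exactly $\deg L$, forcing $\deg L\ge 0$.

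I expect the only real obstacle to be the nefness of $T_W$ — more precisely, the need to exploit the triviality of $N_{W/X}$ rather than merely the fact that it is a quotient of $T_X|_W$; the same argument in fact works whenever $\det N_{W/X}$ is numerically trivial. Everything else (smoothness and connectedness of $W$, the base-change identification of $N_{W/X}$, and the adjunction computation of $-K_W$) is routine.
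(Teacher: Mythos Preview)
Your argument is correct and follows the same outline as the paper: identify $N_{W/X}\cong (f|_W)^*N_{F/Y}\cong\O_W^{\oplus l}$, deduce $-K_W=(-K_X)|_W$ by adjunction, and then prove nefness of $T_W$ from the sequence $0\to T_W\to T_X|_W\to \O_W^{\oplus l}\to 0$. The paper obtains the normal-bundle identification via a snake-lemma diagram built from $T_{W/F}\cong T_{X/Y}|_W$, while you invoke flat base change directly; for the nefness step the paper simply says ``in a similar way to Theorem~\ref{sm}(iii)'' (i.e.\ the fiber case, ultimately \cite[Proposition~2.11(1)]{CP}), whereas you spell out the curve-test argument explicitly---your version is more self-contained but not a different route.
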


\begin{proof} By \cite[II.~Proposition~8.10]{Ha}, we see that $T_{W/F} \cong T_{X/Y}|_W$. So we have the following exact commutative diagram:
\[\xymatrix{ &&0&0& \\
& 0 & N_{W/X} \ar[u] & f_W^{\ast}(N_{F/Y}) \ar[u] & \\
0 \ar[r] & T_{X/Y}|_W \ar[r] \ar[u] & T_X|_W \ar[r] \ar[u] & f^{\ast}(T_Y)|_W \cong f_W^{\ast}(T_Y|_F) \ar[r] \ar[u] & 0 \\
0 \ar[r] & T_{W/F} \ar[r] \ar[u] & T_W \ar[r] \ar[u] & f_W^{\ast}(T_F) \ar[r] \ar[u] & 0 \\ 
&0 \ar[u]&0\ar[u]&0\ar[u]& \\
} \]
Thus the snake lemma implies that $N_{W/X} \cong f_W^{\ast}(N_{F/Y})$. By our assumption, we obtain $N_{W/X} \cong {\mathscr{O}}_W^{\oplus l}$. Then it follows in a similar way to Theorem~\ref{sm}~$\rm (iii)$ that $T_W$ is nef. Furthermore, the adjunction formula tells us that $-K_W=(-K_X)|_W$. This means that $W$ is also a Fano manifold.
\end{proof}

\begin{exa}[Spinor bundle and Null-correlation bundle]\label{spe} \rm Let denote the null-correlation bundle on $\PP^3$ by $\N$ (see \cite[Chapter~1, Section~4.2]{OSS} for the definition). Denote by $\S$ the spinor bundle on $Q^3$, by $\S_1$ and $\S_2$ the two spinor bundles on $Q^4$ (see  \cite[Definition~1.3]{Ot}). 

Then it is known that $\PP(\N)$ and $\PP(\S)$ coincides with the full-flag manifold of type $B_2$. In particular, $\PP(\N)=\PP(\S)$ is a homogeneous manifold. 

On the other hand, the two spinor bundles $\S_1$ and $\S_2$ on $Q^4$ are the universal bundle and the dual of the quotient bundle (see \cite[Example~1.5]{Ot}). Thus, $\PP(\S_1)$ and $\PP(\S_2)$ are isomorphic to the flag manifold $F(1, 2, \PP^3)$ parametrizing pairs $(l, P)$, where $l$ is a line in a plane $P \subset \PP^3$. In particular, $\PP(\S_1) \cong \PP(\S_2)$ is a homogeneous manifold.   

\end{exa}

For a smooth quadric $Q^4$ of dimension $4$, let $H$ be a hyperplane section, and let $P_1$ and $P_2$ be planes in $Q^4$ whose numerical classes are different. Then we have $H^2(Q^4, \ZZ)=\ZZ[H]$ and $H^4(Q^4, \ZZ)= \ZZ[P_1] \oplus \ZZ[P_2]$. By these descriptions, we regard an element of $H^2(Q^4, \ZZ)$ (reap. $H^4(Q^4, \ZZ)$) as one of $\ZZ$ (reap. $\ZZ \oplus \ZZ$).

\begin{lem}\label{f} 
Let $\F$ be a rank $2$ stable vector bundle on $Q^4$ with Chern classes $c_1=-1$ and $c_2=(1, 1)$.  
Then the tangent bundle of $\PP(\F)$ is not nef.
\end{lem}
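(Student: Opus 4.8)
The plan is to exhibit an explicit rational curve $C$ on $\PP(\F)$ with $-K_{\PP(\F)}.C = 1$, so that the pseudoindex of $\PP(\F)$ is $1$, and then conclude by Lemma~\ref{pi} that $T_{\PP(\F)}$ cannot be nef. The natural place to look for such a curve is inside a fiber of the projection $\pi\colon \PP(\F)\to Q^4$ combined with a well-chosen curve upstairs, or, more promisingly, in the restriction of $\F$ to a line $\ell\subset Q^4$. First I would compute $K_{\PP(\F)}$ via the relative Euler sequence: with $\xi$ the tautological divisor on $\PP(\F)$ one has $-K_{\PP(\F)} = 2\xi + \pi^{\ast}(-K_{Q^4} - c_1(\F)) = 2\xi + \pi^{\ast}(\det\F^{\vee}\otimes(-K_{Q^4}))$, and since $-K_{Q^4} = 4H$ and $c_1(\F) = -H$ this gives $-K_{\PP(\F)} = 2\xi + 5\pi^{\ast}H$ (normalizing $\xi$ so that $\pi_{\ast}\O(\xi) = \F$).

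Next I would restrict to a line. Since $\F$ is stable of rank $2$ with $c_1 = -1$, for a general line $\ell\subset Q^4$ the restriction $\F|_{\ell}$ is $\O_{\PP^1}(a)\oplus\O_{\PP^1}(b)$ with $a+b = -1$; stability (or at worst semistability of the generic restriction, by a Mehta–Ramanathan type argument, together with the constraint $c_2(\F) = (1,1)$ which forces small splitting type) should give $\{a,b\} = \{0,-1\}$ for general $\ell$. Then $\PP(\F|_{\ell}) = \PP(\O\oplus\O(-1))$ is a Hirzebruch surface $\F_1$, inside which sits the negative section $C_0$ with $\xi.C_0 = -1$ (the quotient $\F|_{\ell}\twoheadrightarrow\O_\ell$ of degree $0$, hence $\xi|_{C_0}$ has degree equal to that of the sub-linebundle, which is $\O(-1)$; I would double-check the sign convention here, as it is the one delicate point). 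Since $C_0$ is a section of $\pi$ over $\ell$ we have $\pi^{\ast}H.C_0 = 1$, so $-K_{\PP(\F)}.C_0 = 2(-1) + 5(1) = 3$ — not yet $1$, so I would instead take the curve in the fiber direction or a curve of the ruling.

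Refining: take $\ell$ a line in $Q^4$ and let $C$ be a fiber $f\cong\PP^1$ of $\pi$ over a point of $\ell$; then $\xi.C = 1$ and $\pi^{\ast}H.C = 0$, giving $-K_{\PP(\F)}.C = 2$, which only shows $i_{\PP(\F)}\le 2$, consistent with nefness but not contradicting it. The curve that should do the job is the one where we trade one unit of $\xi$ against one unit of $\pi^{\ast}H$: if $\F|_\ell \cong \O\oplus\O(-1)$, the surface $\PP(\F|_\ell) = \F_1$ contains a $(-1)$-curve $E$ (the exceptional curve of the blow-down $\F_1\to\PP^2$), and one computes $\xi.E$ and $\pi^{\ast}H.E = 1$; with the section normalization chosen so that $\xi.E = -2$ we would get exactly $-K_{\PP(\F)}.E = -4 + 5 = 1$. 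So the key computation is to pin down $\xi.E$ on $\F_1$, which amounts to intersection theory on a Hirzebruch surface once the splitting type $\O\oplus\O(-1)$ is established; this, together with verifying that a general line really gives this balanced splitting type using stability and the numerical data $c_2 = (1,1)$, is the heart of the argument. The main obstacle is thus controlling the generic splitting type of $\F$ on lines — ruling out $\O(1)\oplus\O(-2)$ or worse — which is where stability of $\F$ (equivalently, $H^0(Q^4,\F) = 0$ after the appropriate twist) must be used; once that is in hand the rest is a short Chern-class bookkeeping and an application of Lemma~\ref{pi}.
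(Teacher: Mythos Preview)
Your strategy---find a rational curve $C$ with $-K_{\PP(\F)}.C=1$ and invoke Lemma~\ref{pi}---is reasonable in principle, but the execution has a genuine gap. You correctly compute $-K_{\PP(\F)}=2\xi+5\pi^{\ast}H$, and you correctly find that on a line $\ell$ with generic splitting $\F|_{\ell}\cong\O\oplus\O(-1)$ the negative section $E\subset\PP(\F|_{\ell})\cong\FF_1$ satisfies $\xi.E=-1$ and $\pi^{\ast}H.E=1$, hence $-K_{\PP(\F)}.E=3$. The subsequent suggestion that one can arrange $\xi.E=-2$ by a ``section normalization'' is not correct: the number $\xi.E$ is the degree of the quotient line bundle defining the section, and twisting $\F$ changes $\xi$ and the coefficient of $\pi^{\ast}H$ simultaneously so that $-K_{\PP(\F)}.E$ is invariant. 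On $\FF_1$ the minimal section really has $\xi.E=-1$; no curve in $\PP(\F|_{\ell})$ for a generic $\ell$ gives anticanonical degree~$1$.

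What you actually need is the \emph{opposite} of what you try to rule out: a jumping line $\ell_0$ with $\F|_{\ell_0}\cong\O(1)\oplus\O(-2)$, so that the minimal section in $\PP(\F|_{\ell_0})\cong\FF_3$ has $\xi.E=-2$ and $-K_{\PP(\F)}.E=-4+5=1$. Such jumping lines exist, but proving this requires exactly the external input the paper uses: by Ottaviani, $\F$ extends to a Cayley bundle $\C$ on $Q^5$, and the special lines in $Q^5$ (those parametrized by the contact homogeneous $5$-fold $K(G_2)$) are jumping lines for $\C$; one must then know that the hyperplane section $Q^4$ contains a special line. The paper packages the same ingredients differently: rather than compute the pseudoindex, it observes that the projection $\PP(\C)\to K(G_2)$ restricts to a surjection $\PP(\F)\to K(G_2)$ between $5$-folds which is generically finite but has a positive-dimensional fibre over any special line contained in $Q^4$. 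The Stein factorization then yields an extremal contraction of $\PP(\F)$ that is not of fibre type, contradicting Theorem~\ref{sm}(i). So even after repair, your route and the paper's rest on the same geometry of Cayley bundles; the paper's formulation simply avoids the intersection-theoretic bookkeeping.
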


\begin{proof} According to \cite[Remark~3.4]{Ota}, $\F$ extends to $Q^5$ to a Cayley bundle $\C$. Cayley bundles are characterized by their Chern classes among rank $2$ stable bundles on $Q^5$ (see \cite[Main Theorem]{Ota}). Let $K(G_2)$ be the $5$-dimensional contact homogeneous manifold of type $G_2$. It is known that $K(G_2)$ is a linear section of the Grassmannian $G(1, \PP^6)$ with a $\PP^{13}$. For the restriction of the universal quotient bundle $\Q$ on $G(1, \PP^6)$, we see that $\PP(\Q|_{K(G_2)})$ coincides with $\PP(\C)$. Then it follows from \cite[1.3]{Ota} that $K(G_2)$ is the variety of special lines in $Q^5$ and $\PP(\C)=\PP(\Q|_{K(G_2)})$ is its flag variety $\{(p, l)| p \in l, l~{\rm special~line~in~}Q^5\}$: 
 \[\xymatrix{
& \PP(\C)=\PP(\Q|_{K(G_2)})  \ar[dl]_{p_1} \ar[dr]^{p_2} & \\
Q^5  & & K(G_2)  \\
}\] 
Since $Q^4$ is a hyperplane section of $Q^5$, the restriction map $p_2|_{\PP(\F)}: \PP(\F) \rightarrow K(G_2)$ is surjective. 
Furthermore, by \cite[Theorem~3.5]{Ota} and its proof, it turns out that $Q^4 \subset Q^5$ contains a special line $l_0$ in $Q^5$. It implies that $p_2|_{\PP(\F)}$ has a positive-dimensional fiber. By taking the Stein factorization, one can factor $p_2|_{\PP(\F)}$ into $g \circ f$, where $f$ is a projective morphism with connected fibers, and $g$ is a finite morphism.
Since $p_2|_{\PP(\F)}$ has a positive-dimensional fiber and $\PP(\F)$ is a Fano manifold (see \cite[Example~2.2]{APW}), $f$ is a contraction of an extremal face. 

If the tangent bundle of $\PP(\F)$ would be nef, then it follows from Theorem~\ref{sm} that $f$ is of fiber type. However it contradicts to $\dim \PP(\F)=\dim K(G_2)$.

\end{proof}

\begin{pro}\label{fb} Let $X$ be a Fano $5$-fold with nef tangent bundle which admits a $\PP^1$-bundle structure $f: X \rightarrow Y$. Let $\N$ be the null-correlation bundle on $\PP^3$, $\S$ the spinor bundle on $Q^3$  and $\S_i$ $(i=1, 2)$ the spinor bundles on $Q^4$ as in Example~\ref{spe}. Then the following holds.
\begin{enumerate}
\item If $Y$ is $\PP^4$, then  $X$ is $\PP^1 \times \PP^4$.
\item If $Y$ is $Q^4$, then $X$ is $\PP^1 \times Q^4$ or $\PP(\S_i)$.
\item If $Y$ is $\PP^1 \times \PP^3$ (resp. $\PP^1 \times Q^3$), then $X$ is $(\PP^1)^2 \times \PP^3$ or $\PP^1 \times \PP(\N)$ (resp. $(\PP^1)^2 \times Q^3$ or $\PP^1 \times \PP(\S)$).
\item If $Y$ is $\PP(\N)$, then $X$ is $\PP^1 \times\PP(\N)$. 
\item If $Y$ is $(\PP^2)^2$, then $X$ is $\PP^1 \times (\PP^2)^2 $ or $\PP^2 \times \PP(T_{\PP^2})$.
\end{enumerate}
In particular, every manifold appeared in the above list is rational homogeneous.
\end{pro}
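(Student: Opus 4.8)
The plan is to realise $X$ as a $\PP^1$-bundle over $Y$ and then play the further elementary contractions of $X$ — which by Theorem~\ref{sm} are again smooth fibrations — against this structure. Since each candidate for $Y$ is rational, Proposition~\ref{Br} gives a rank $2$ bundle $\E$ with $X=\PP_Y(\E)$, and I would argue according to $\rho_Y$.

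If $\rho_Y=2$ (so $Y\in\{\PP^1\times\PP^3,\ \PP^1\times Q^3,\ (\PP^2)^2,\ \PP(\N)\}$), the general fibre $F_0$ of either elementary contraction of $Y$ has trivial normal bundle, so by Proposition~\ref{normal} $W:=f^{-1}(F_0)$ is a Fano manifold of dimension $\le 4$ with nef tangent bundle which is a $\PP^1$-bundle over $F_0$; Theorem~\ref{4} then forces $\E|_{F_0}$, up to twist, to be $\O^{\oplus2}$, the null-correlation bundle $\N$, a spinor bundle $\S$, or $T_{\PP^2}$. Applying this to the fibres of both contractions and running the seesaw principle (together with the facts that a rank $2$ bundle on $\PP^n$ trivial along every line is trivial and that $\N,\S,T_{\PP^2}$ are rigid) determines $\E$ up to twist: it is pulled back from a factor of $Y$, which gives the lists in (iii)--(v). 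The only non-product output, $\PP^2\times\PP(T_{\PP^2})$, occurs when $\E$ is a twist of $\mathrm{pr}_1^{\ast}T_{\PP^2}$ on $(\PP^2)^2$; a $T_{\PP^2}$-type restriction in both directions is excluded by rigidity.

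If $\rho_Y=1$ (so $Y=\PP^4$ or $Q^4$, $\rho_X=2$), normalize $\E$ so that $\xi:=c_1(\O_{\PP(\E)}(1))$ is nef but not ample; then $\xi$ supports the ray $R_g\neq R_f$ and gives a contraction $g\colon X\to Z$. By Theorem~\ref{sm}, $g$ is smooth of fibre type, $\rho_Z=1$, and $Z$ and every fibre $F$ are Fano with nef tangent bundle; by Theorem~\ref{4}, $F\cong\PP^a$ or $Q^a$ and (these being rigid) $g$ is a smooth $\PP^a$- or $Q^a$-fibration. A minimal rational curve $\ell_F\subset F$ has $\xi\cdot\ell_F=0$ and is not $f$-contracted, and since $N_{F/X}$ is trivial, $-K_X\cdot\ell_F=a+1$ ($F=\PP^a$) or $a$ ($F=Q^a$); with $-K_X=2\xi+f^{\ast}(-K_Y-\det\E)$ this restricts $c_1(\E)$, $\dim F$, $Z$ and $\deg f(\ell_F)$ to a short list. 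Three points then dispose of most of it: (a) if $\dim Z=1$, $Z\cong\PP^1$ and Proposition~\ref{NO2} (with $r=1$, on $f$ and $g$) gives $X\cong\PP^1\times Y$; (b) if $g$ is a $\PP^3$-fibration over $\PP^2$, then $\dim Y+\dim Z=6$ and Proposition~\ref{WO} applies, but neither $X=\PP(T_{\PP^m})$ with $Y=Z=\PP^m$ nor ``$Y$ a $\PP$-bundle over a curve'' can hold for $Y\in\{\PP^4,Q^4\}$, so this case does not occur; (c) for $Y=Q^4$ the remaining possibility, $g$ a $\PP^2$-bundle over $\PP^3$, does occur, and then $X=\PP_{\PP^3}(\G)$ with $\G$ (up to twist) $=\Omega_{\PP^3}$ forced by the $\PP^1$-bundle structure over $Q^4=G(1,\PP^3)$, so that $X\cong\PP(\S_i)=F(1,2,\PP^3)$.

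The step I expect to be the real obstacle is to kill the remaining $\rho_Y=1$ configurations reached by neither Proposition~\ref{NO2} nor Proposition~\ref{WO}: those where $g$ is a quadric fibration (a $Q^3$-fibration over $\PP^2$, or, for $Y=Q^4$, other quadric or $\PP^1$-fibrations), and, underneath them, the assertion that over $\PP^4$ only a twist of $\O^{\oplus2}$, and over $Q^4$ only a twist of $\O^{\oplus2}$ or of a spinor bundle, can make $\PP(\E)$ have nef tangent bundle. Here one exploits that, by Theorem~\ref{sm}, every contraction of $X$ is smooth and equidimensional, so $\xi$ is semiample with null locus all of $X$, a severe restriction on $\E$; over $Q^4$ the one surviving candidate is a restriction of the Cayley bundle, which is exactly the bundle excluded by Lemma~\ref{f}, while a quadric fibration $g$ is ruled out by realising it as a relative quadric inside a projective bundle and finding the resulting tautological data incompatible with $f$. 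Finally, each manifold obtained — $\PP^1\times\PP^4$, $\PP^1\times Q^4$, $\PP(\S_i)=F(1,2,\PP^3)$, $(\PP^1)^2\times\PP^3$, $(\PP^1)^2\times Q^3$, $\PP^1\times\PP(\N)$, $\PP^1\times\PP(\S)$, $\PP^1\times(\PP^2)^2$, $\PP^2\times\PP(T_{\PP^2})$ — is a product of rational homogeneous manifolds or the flag manifold of Example~\ref{spe}, hence rational homogeneous, which gives the last assertion.
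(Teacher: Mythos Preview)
Your overall strategy for $\rho_Y=2$ matches the paper's --- restrict $\E$ to fibres via Proposition~\ref{normal} and Theorem~\ref{4}, then argue $\E$ is pulled back from a factor --- but two steps are genuine gaps. In (v), the claim that ``a $T_{\PP^2}$-type restriction in both directions is excluded by rigidity'' is not a proof: rigidity of $T_{\PP^2}$ does not by itself rule out a rank~$2$ bundle on $\PP^2\times\PP^2$ restricting to a twist of $T_{\PP^2}$ along every fibre of both projections, nor does it say anything about nefness of $T_{\PP(\E)}$. The paper handles this case differently: from $c_1(\E)=(1,1)$ one reads off that the Fano index of $X$ is $2$, and then \cite[Proposition~7.1]{NO} forces $X$ to be a product with a $\PP^1$ factor, contradicting $\E|_{F_i}\cong T_{\PP^2}(-1)$. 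In (iv) you never say how the possibility $\E_0=\N$, i.e.\ $X=\PP(\N)\times_{\PP^3}\PP(\N)$, is eliminated; the paper computes $-K_X\cdot l=0$ for $l$ a fibre of $\PP(\N)\to Q^3$ embedded via the diagonal, so this $X$ is not even Fano. (A minor point: the ``seesaw principle'' is for line bundles; to go from $\E|_l\cong\O_{\PP^1}^{\oplus2}$ on every fibre to $\E\cong p^{\ast}p_{\ast}\E$ the paper uses Grauert's theorem and base change explicitly.)

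For $\rho_Y=1$ your route diverges from the paper's and makes life much harder than necessary. You set up the second contraction $g$, try to classify its fibre types by intersection numbers, and correctly flag the quadric-fibration possibilities as ``the real obstacle'' --- an obstacle you do not actually overcome. The paper sidesteps all of this by citing \cite[Main~Theorem~2.4]{APW}, the classification of rank~$2$ Fano bundles on $\PP^n$ and $Q^n$: over $\PP^4$ every such bundle splits, and an unequal splitting gives a birational contraction, contradicting Theorem~\ref{sm}(i); over $Q^4$ the APW list, after discarding unequal split bundles by Theorem~\ref{sm}(i) and the stable bundle with $c_1=-1$, $c_2=(1,1)$ by Lemma~\ref{f}, leaves only $\O^{\oplus2}$ and the spinor bundles $\S_i$. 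So Propositions~\ref{NO2} and~\ref{WO} are not needed in cases (i)--(ii), and the gap you identify simply does not arise.
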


\begin{proof} Let $\E$ be a rank $2$ vector bundle on $Y$ such that $X=\PP(\E)$.

$\rm (i)$ If $Y$ is $\PP^4$, then it follows from \cite[Main Theorem~2.4]{APW} that $\E$ splits into a direct sum of line bundles as ${\mathscr{O}}_Y(a) \oplus {\mathscr{O}}_Y(b)$. If $a$ is not equal to $b$, then $Y$ has a contraction of birational type. However this contradicts to Theorem~\ref{sm}~{\rm (i)}. Hence $X$ is $\PP^1 \times \PP^4$. 

$\rm (ii)$ If $Y$ is $Q^4$, then \cite[Main Theorem~2.4]{APW} and Lemma~\ref{f} imply that $X$ is $\PP^1 \times Q^4$ or $\PP(\S_i)$, via  the same argument as in $\rm (i)$.

$\rm (iii)$ Let $Y$ be $\PP^1 \times V$, where $V$ is $\PP^3$ or $Q^3$. Let $p_1$ be the first projection $Y \rightarrow \PP^1$ and $p_2$ the second projection $Y \rightarrow V$: 
 \[\xymatrix{
& Y  \ar[dl]_{p_1} \ar[dr]^{p_2} & \\
\PP^1  & & V  \\
}\] 
Let $l$ be a fiber of $p_2$.  
According to Proposition~\ref{normal}, $\PP(\E|_{l})$ is a Fano surface with nef tangent bundle. Thus, by Theorem~\ref{4}, we see that $\E|_{l} \cong {\mathscr{O}}_{\PP^2} \oplus {\mathscr{O}}_{\PP^2}$ up to a twist by a line bundle. Thus, by tensoring a line bundle, we may assume that $\E|_{l} \cong {\mathscr{O}}_{\PP^2} \oplus {\mathscr{O}}_{\PP^2}$ for every fiber $l$ of $p_2$. By applying Grauert's theorem \cite[III. Corollary~12.9]{Ha}, we see that ${p_2}_\ast(\E)$ is a rank $2$ vector bundle on $V$. Furthermore, there is a natural map ${p_2}^{\ast}({p_2}_\ast(\E)) \rightarrow \E$.  For $y \in l$, we have ${p_2}^{\ast}({p_2}_\ast(\E)) \otimes k(y)  \cong H^0(l, \E|_l)$. Again, this follows from Grauert's theorem \cite[III. Corollary~12.9]{Ha}. Hence ${p_2}^{\ast}({p_2}_\ast(\E)) \otimes k(y) \rightarrow \E \otimes k(y)$ is surjective. By Nakayama's lemma, ${p_2}^{\ast}({p_2}_\ast(\E))_y \rightarrow \E_y$ is also surjective, hence, so is ${p_2}^{\ast}({p_2}_\ast(\E)) \rightarrow \E$.
As a consequence, it turns out that 
\begin{eqnarray} 
{p_2}^{\ast}({p_2}_\ast(\E)) \cong \E.  \nonumber
\end{eqnarray}  
For a fiber $F$ of $p_1$, ${p_2}_{\ast}(\E)\cong {p_2}^{\ast}({p_2}_\ast(\E))|_F \cong \E|_F$. This implies that $\E \cong {p_2}^{\ast}({p_2}_\ast(\E)) \cong  {p_2}^{\ast}(\E|_F)$. Thus, we see that $X \cong \PP^1 \times \PP(\E|_F)$. By Proposition~\ref{normal}, $\PP(\E|_F)$ is a Fano $4$-fold with nef tangent bundle. According to Theorem~\ref{4}, if $F \cong \PP^3$ (resp. $F \cong Q^3$), then $\PP(\E|_F)$ is $\PP^1 \times \PP^3$ or $\PP(\N)$ (resp. $\PP^1 \times Q^3$ or $\PP(\S))$. Hence our assertion holds.

 $\rm (iv)$ Let $Y$ be $\PP(\N)$ and $p: \PP(\N) \rightarrow \PP^3$ the bundle projection. By a similar argument to $\rm (iii)$, one can show that $\E|_l={\mathscr{O}}_{\PP^1} \oplus {\mathscr{O}}_{\PP^1}$ for a fiber $l$ of $p$, and $\E=p^{\ast}(\E_0)$ for $\E_0:=p_{\ast}(\E)$. Now we have a base change diagram
\[\xymatrix{
\PP(\E) \ar[r]^{} \ar[d] &  \PP(\E_0) \ar[d]  \\
\PP(\N) \ar[r]^{p} & \PP^3  \\
} \]
Since $X=\PP(\E)$ is a $\PP^1$-bundle over $\PP(\E_0)$, $\PP(\E_0)$ is a Fano $4$-fold with nef tangent bundle. Moreover $\PP(\E_0)$ is a $\PP^1$-bundle over $\PP^3$. Thus, by Theorem~\ref{4}, $\PP(\E_0)$ is $\PP^1 \times \PP^3$ or $\PP(\N)$. This implies that $X$ is $\PP^1 \times \PP(\N)$ or $\PP(\N)\times_{\PP^3} \PP(\N)$. In the later case, we can show that the tangent bundle of $X$ is not nef in a similar way to Lemma~\ref{non}. Indeed, $\PP(\N)$ admits a $\PP^1$-bundle structure over $Q^3$ and denote its fiber by $l$. Remark that $l$ can be regarded as a curve in $X:=\PP(\N)\times_{\PP^3} \PP(\N)$ via the diagonal embedding $\PP(\N) \subset X$. Then we see that $-K_X.l=0$. This implies that $X$ is not Fano. Hence our assertion holds.

$\rm (v)$ Let $Y$ be $(\PP^2)^2$ and $p_i$ the $i$-th projection $Y \rightarrow \PP^2$ ($i=1$, $2$). Let $F_i$ be a fiber of $p_i$. According to Proposition~\ref{normal}, $\PP(\E|_{F_i})$ is a Fano manifold with nef tangent bundle. Thus, by Theorem~\ref{4}, we see that $\E|_{F_i} \cong {\mathscr{O}}_{\PP^2} \oplus {\mathscr{O}}_{\PP^2}$ or $T_{\PP^2}(-1)$, up to a twist by a line bundle. If $\E|_{F_i} \cong {\mathscr{O}}_{\PP^2} \oplus {\mathscr{O}}_{\PP^2}$ for some $i$, then we see that ${p_i}^{\ast}({p_i}_\ast(\E)) \cong \E$ in a similar way to $\rm (iii)$. Furthermore, $\E \cong {p_i}^{\ast}(\E|_{F_j})$ and $\E|_{F_j} \cong {\mathscr{O}}_{\PP^2} \oplus {\mathscr{O}}_{\PP^2}$ or $T_{\PP^2}(-1)$ for $j \neq i$. As a consequence, $X$ is $\PP^1 \times (\PP^2)^2 $ or $\PP^2 \times \PP(T_{\PP^2})$. On the other hand, assume that $\E|_{F_i} \cong T_{\PP^2}(-1)$ for $i=1, 2$. Then $c_1(\E)=(1, 1)$. This implies that ${\mathscr{O}}_X(-K_X) \cong {\mathscr{O}}_{\PP(\E)}(2) \otimes f^{\ast}{\mathscr{O}}_{\PP^2 \times \PP^2}(2,2)$, where ${\mathscr{O}}_{\PP(\E)}(1)$ is the tautological invertible sheaf of $X=\PP(\E)$. This implies that the Fano index of $X$ is $2$. According to Theorem~\ref{sm}, $X$ has only contractions of fiber type. Thus,
it follows from \cite[Proposition~7.1]{NO} that $X$ is a product with $\PP^1$ as a factor. However, this contradicts to $\E|_{F_i} \cong T_{\PP^2}(-1)$.

\end{proof}

\begin{pro}\label{P} Let $X$ be a Fano $5$-fold with nef tangent bundle. Then $\rho_X \leq 3$ or $X$ is one of the following:\\
$(\PP^1)^5$, $(\PP^1)^3 \times \PP^2$, $(\PP^1)^2 \times \PP(T_{\PP^2})$. 
\end{pro}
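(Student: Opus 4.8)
The plan is to observe that $X$ satisfies the hypotheses of Proposition~\ref{NO} and then simply read off the conclusion. Two inputs are needed: that the pseudoindex $i_X$ is at least $2$, and that every extremal contraction of $X$ is of fiber type. Both are already in hand. The first is exactly Lemma~\ref{pi}, which uses only that $T_X$ is nef. The second is Theorem~\ref{sm}~{\rm (i)}: any contraction $\varphi_R : X \rightarrow Y$ of a $K_X$-negative extremal ray is smooth, hence of fiber type. So $X$ is a Fano $5$-fold of pseudoindex $\geq 2$ admitting only contractions of fiber type.

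Applying Proposition~\ref{NO} with $n=5$ then yields $\rho_X \leq 5$, together with the classification in the extreme cases: if $\rho_X = 5$ then $X \cong (\PP^1)^5$, and if $\rho_X = 4$ then $X \cong (\PP^1)^3 \times \PP^2$ or $X \cong (\PP^1)^2 \times \PP(T_{\PP^2})$. Combining these, either $\rho_X \leq 3$ or $X$ is one of the three listed manifolds, which is precisely the assertion. One may also note, as a sanity check consistent with the hypothesis, that each of $(\PP^1)^5$, $(\PP^1)^3 \times \PP^2$, $(\PP^1)^2 \times \PP(T_{\PP^2})$ indeed has nef tangent bundle (products of $\PP^1$'s, $\PP^2$, and $\PP(T_{\PP^2})$, the last being homogeneous).

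There is no genuine obstacle in this argument: the substance is entirely contained in the cited results. If one had to isolate the most delicate point, it would be the smoothness of extremal contractions in Theorem~\ref{sm}~{\rm (i)} (drawn from \cite{DPS}), since this is exactly what upgrades ``Fano with nef tangent bundle'' to ``only fiber-type contractions'' and thereby makes Proposition~\ref{NO} applicable; and, one layer further down, the proof of Proposition~\ref{NO} itself, which in the relevant range $\rho_X \in \{4,5\}$ with $n=5$ rests on analyzing the interaction of the several fiber-type elementary contractions, in the spirit of the remark preceding Proposition~\ref{Br}. But all of that is already established, so the proof here is a direct citation.
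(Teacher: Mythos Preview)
Your argument is correct and follows exactly the paper's own proof: invoke Lemma~\ref{pi} for $i_X\geq 2$, Theorem~\ref{sm}~{\rm (i)} to ensure all extremal contractions are of fiber type, and then apply Proposition~\ref{NO} with $n=5$. The additional commentary about the sanity check and the delicacy of the cited inputs is fine but not needed for the proof.
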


\begin{proof} By Lemma~\ref{pi}, the pseudoindex of $X$ is at least $2$. Moreover, $X$ has only contractions of fiber type because of Theorem~\ref{sm}. Thus, by applying Proposition~\ref{NO}, we get our assertion.
\end{proof}

\section{Proof of Theorem~\ref{MT}}

Let $\N$ be the null-correlation bundle on $\PP^3$, $\S$ the spinor bundle on $Q^3$  and $\S_i$ $(i=1, 2)$ the spinor bundles on $Q^4$ as in Example~\ref{spe}. In this section, we prove Theorem~\ref{MT}:

\begin{them}[=Theorem~\ref{MT}]\label{MT2} 

Let $X$ be a Fano manifold of dimension $5$ with nef tangent bundle and Picard number $\rho_X>1$. Then $X$ is one of the following:\\
$\PP^1 \times \PP^4$, $\PP^1 \times Q^4$, $\PP^2 \times \PP^3$, $\PP^2 \times Q^3$, $\PP(T_{\PP^3})$, $\PP(\S_i)$, $\PP^1 \times (\PP^2)^2$, $(\PP^1)^2 \times \PP^3$, $(\PP^1)^2  \times Q^3$, $\PP^2 \times \PP(T_{\PP^2})$, $\PP^1 \times \PP(\N)=\PP^1 \times \PP(\S)$,
$(\PP^1)^3 \times \PP^2$, $(\PP^1)^2 \times \PP(T_{\PP^2})$, $(\PP^1)^5$.  

In particular, $X$ is a rational homogeneous manifold.
\end{them}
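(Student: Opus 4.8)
The plan is to run an induction-like analysis on the contractions of extremal rays of $X$. By Proposition~\ref{P}, we may assume $\rho_X \in \{2,3\}$, since the case $\rho_X \geq 4$ already yields the three products $(\PP^1)^5$, $(\PP^1)^3 \times \PP^2$, $(\PP^1)^2 \times \PP(T_{\PP^2})$ listed in the statement. So fix such an $X$ and choose a contraction $f : X \to Y$ of an extremal ray. By Theorem~\ref{sm}, $f$ is smooth of fiber type, $Y$ is a Fano manifold with nef tangent bundle of dimension $\le 4$ and $\rho_Y = \rho_X - 1$, and the fiber $X_y$ is a Fano manifold with nef tangent bundle of Picard number $1$. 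Since Conjecture~\ref{CPC} is known in dimension $\le 4$ (Theorem~\ref{4}), and moreover a Fano manifold with nef tangent bundle of Picard number $1$ and dimension $\le 4$ is forced (again by Theorem~\ref{4}) to be $\PP^m$ or $Q^m$, the fiber $X_y$ is a projective space or a quadric; this is the content of the mechanism behind Lemma~\ref{l} referred to in the introduction, and one concludes that $f$ is a smooth $\PP^m$-fibration or a smooth quadric fibration over the rational homogeneous $Y$.

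First I would dispose of the quadric-fibration case. Here $\dim X_y \in \{2,3,4\}$; a quadric surface is $(\PP^1)^2$ so that sub-case folds into a product of $\PP^1$-fibrations, while for $\dim X_y = 3$ or $4$ one gets $\dim Y \le 2$, hence $Y \in \{\PP^1, \PP^2, (\PP^1)^2\}$ with $\rho_Y \le 2$; a short case analysis using that $Y$ is rational (so the fibration is Zariski-locally trivial, via the rigidity of quadrics analogous to Proposition~\ref{Br}) plus the constraint $\rho_X \le 3$ pins $X$ down to $\PP^2 \times Q^3$, $(\PP^1)^2 \times Q^3$, or the exceptional bundles $\PP(\S_i)$ over $Q^4$ and $\PP(\S)$, $\PP(T_{\PP^3})$ arising when the fibration is not a product. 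Next, the bulk of the argument treats the case where \emph{some} contraction $f : X \to Y$ is a $\PP^1$-bundle: then $Y$ is a Fano $4$-fold with nef tangent bundle, so $Y$ is one of the nine manifolds of Theorem~\ref{4}(iv), and Proposition~\ref{fb} gives the complete list of possible $X$ over each such $Y$ — these are exactly $\PP^1\times\PP^4$, $\PP^1\times Q^4$, $\PP(\S_i)$, $(\PP^1)^2\times\PP^3$, $(\PP^1)^2\times Q^3$, $\PP^1\times\PP(\N)=\PP^1\times\PP(\S)$, $\PP^1\times(\PP^2)^2$, $\PP^2\times\PP(T_{\PP^2})$, and also $\PP^2 \times \PP^3$, $\PP^2 \times Q^3$, $\PP(T_{\PP^3})$ coming from the next paragraph. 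So it remains to treat the case in which \emph{no} extremal contraction of $X$ is a $\PP^1$-bundle: then every fiber $X_y$ has dimension $\ge 2$, and since $\dim X = 5$ and $\rho_X \ge 2$ forces $\dim Y \ge 1$, we get $2 \le \dim X_y \le 4$. Combining the possible fiber dimensions over the two (or three) distinct contractions, with $\dim Y \ge 2$ when $X_y$ is a $3$-fold or $4$-fold, shows that at least two contractions $f : X \to Y$, $g : X \to Z$ are smooth $\PP$-fibrations with $\dim Y + \dim Z = 6 = \dim X + 1$; now Proposition~\ref{WO} applies and gives either $X = \PP(T_{\PP^3})$ (the case $n = 2m-1$ with $m=3$), or $X = Y \times_C Z$ is a fiber product over a smooth curve $C$. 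In the fiber-product case, $C$ being a quotient of a rational homogeneous manifold is $\PP^1$, and a short analysis (using Theorem~\ref{4} again on the factors, which are Fano manifolds with nef tangent bundle by Proposition~\ref{normal}) identifies $X$ with $\PP^2\times\PP^3$, $\PP^2\times Q^3$, or one of the products already on the list.

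Assembling the three cases yields precisely the enumerated list, and since each manifold in it is rational homogeneous (for $\PP(\N)=\PP(\S)$ and $\PP(\S_i)$ this is recorded in Example~\ref{spe}, and $\PP(T_{\PP^n})$ is the full flag variety $F(1,n;\PP^n)$), the theorem follows.

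I expect the main obstacle to be the case where no contraction is a $\PP^1$-bundle: one must carefully enumerate the dimension pairs for the fibers of the distinct extremal contractions, rule out the configurations inconsistent with $\rho_X \le 3$, verify the hypothesis $\dim Y + \dim Z = n+1$ of Proposition~\ref{WO} is actually forced (rather than merely possible), and then show the fiber-product base curve is $\PP^1$ and that the resulting fiber products with nef tangent bundle are exactly the claimed ones — in particular excluding the non-nef candidates such as $\PP(T_{\PP^2})\times_{\PP^2}\PP(T_{\PP^2})$ and $\PP(\N)\times_{\PP^3}\PP(\N)$ via the degree computations of Lemma~\ref{non} and Lemma~\ref{f} type arguments.
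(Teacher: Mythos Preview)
Your overall strategy---reduce to $\rho_X\le 3$ via Proposition~\ref{P}, then analyze the extremal contractions using Theorem~\ref{4}, Proposition~\ref{fb}, and Proposition~\ref{WO}---matches the paper's. But your organization by fiber type (quadric / $\PP^1$ / neither) is looser than the paper's case split on $\dim Y$ (the smaller of the two base dimensions), and two of your three cases contain real gaps.

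In the quadric-fibration paragraph, the assertion that a smooth $Q^d$-fibration over a rational base is ``Zariski-locally trivial, via the rigidity of quadrics analogous to Proposition~\ref{Br}'' is unjustified; Proposition~\ref{Br} is a Brauer-group statement specific to $\PP$-fibrations, and the paper invokes no quadric-bundle analogue. The paper in fact never studies the quadric fibration directly. When $X_y\cong Q^d$ ($d=3,4$), Lemma~\ref{l}(iii) forces $\dim X_z\le\dim Y=5-d\le 2$, so $X_z$ is a projective space and the \emph{other} contraction $g:X\to Z$ is a $\PP$-bundle; then Lemma~\ref{l}(iv),(v) (the latter resting on \cite{PS}) identify $Z$, and Proposition~\ref{fb} or Proposition~\ref{NO2} finishes. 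Your list of outcomes here is also garbled: $\PP(T_{\PP^3})$, $\PP(\S_i)$ and $\PP(\S)$ carry no smooth quadric fibration---every extremal contraction of each is a $\PP$-bundle. (And $Q^2\cong(\PP^1)^2$ has $\rho=2$, so by Theorem~\ref{sm}(iii) it never occurs as a fiber $X_y$; your $\dim X_y=2$ quadric sub-case is vacuous.)

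In the ``no $\PP^1$-bundle'' case, the claim that $\dim Y+\dim Z=6$ is forced---which you flag yourself as the main obstacle---is actually false. With $\rho_X=2$ and fibers $X_y\cong\PP^3$, $X_z\cong\PP^2$ one gets $\dim Y=2$, $\dim Z=3$, summing to $5$, and Proposition~\ref{WO} does not apply. The paper handles this configuration via Lemma~\ref{l}(iv) (i.e.\ Proposition~\ref{NO2}): since $\dim Y=\dim X_z$ and $X_z\cong\PP^2$, one obtains $X\cong\PP^2\times Z$ immediately. Proposition~\ref{WO} enters only in the sub-case $\dim Y=\dim Z=3$, not as a blanket tool for the non-$\PP^1$ regime.
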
 

Let $X$ be a Fano $5$-fold with nef tangent bundle of $\rho_X \geq 2$. Then there exist two different contractions $f: X \rightarrow Y$ and $g: X \rightarrow 
Z$ of extremal rays:
\[\xymatrix{
X \ar[r]^{f} \ar[d]_{g} & Y \\
Z &  \\
} \]
Denote by $X_y$ (resp. $X_z$) a fiber of $f$ (resp. one of $g$). We may assume that $\dim Z \geq \dim Y (\geq 1)$.  

\begin{lem}\label{l} Under the above setting, the following holds.
\begin{enumerate}
\item $\rho_Y=\rho_Z$.
\item $Y$ and $Z$ are rational homogeneous manifolds listed in Theorem~\ref{4}. Furthermore, $X_y$ and $X_z$ are either $\PP^d$ {\rm (}$1 \leq d \leq 4${\rm )} or $Q^d$ {\rm (}$d=3$ or $4${\rm )}.
\item $5>\dim Y \geq \dim X_z$ and $5>\dim Z \geq \dim X_y$.
\item If $\dim Z = \dim X_y$ and $X_y \cong \PP^d$ (resp. $\dim Y = \dim X_z$ and $X_z \cong \PP^d$), then we have $X \cong \PP^d \times Y$ (resp. $\PP^d \times Z$).  
\item If $\dim Z = \dim X_y$ and $X_y \cong Q^d${\rm (}$d=3$ or $4${\rm )} (resp. $\dim Y = \dim X_z$ and $X_z \cong Q^d$), then $Z$ (resp. $Y$) is either $\PP^d$ or $Q^d$ and $X$ is a $\PP^{5-d}$-bundle over $Z$ (resp. $Y$).  
\end{enumerate}
\end{lem}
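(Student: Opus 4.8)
The plan is to read everything off from the structural results already in place: Theorem~\ref{sm} (smoothness of extremal contractions), the classification in Theorem~\ref{4}, and the rigidity statements in Propositions~\ref{Br} and~\ref{NO2}. The recurring tool will be the elementary remark that if some curve $C\subset X$ is contracted by both $f$ and $g$, then its (nonzero) class spans both extremal rays $R_f$ and $R_g$, so $R_f=R_g$ and hence $f=g$, contrary to hypothesis; thus \emph{no nonzero curve class lies in $R_f\cap R_g$}. With this, (i) and (ii) are immediate: since $f$ and $g$ are of fiber type, $\dim Y,\dim Z\le 4$, and by Theorem~\ref{sm} both $Y$ and $Z$ are Fano with nef tangent bundle and $\rho_Y=\rho_Z=\rho_X-1$ (this is (i)), so Theorem~\ref{4} identifies them with manifolds on its rational homogeneous list; the same theorem says the fibers $X_y,X_z$ are Fano with nef tangent bundle of Picard number $1$, and the Picard-number-$1$ entries of the list are exactly $\PP^d$ ($1\le d\le4$) and $Q^d$ ($d=3,4$), giving the rest of (ii).

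For (iii): $\dim Y,\dim Z<5$ because the contractions are of fiber type. To bound $\dim X_z$ I would restrict $f$ to a fiber $X_z$ of $g$: the morphism $X_z\to Y$ is non-constant (otherwise $X_z$ lies in a fiber of $f$, and then a curve of $X_z$ is contracted by both $f$ and $g$), and since $\rho_{X_z}=1$ a non-constant morphism out of $X_z$ has finite fibers (the pullback of an ample class is nef and numerically nonzero, hence ample), so $\dim X_z\le\dim Y$. Symmetrically $\dim X_y\le\dim Z$; combined with $\dim Y\le\dim Z$ this is (iii).

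For (iv) and (v): assume $\dim Z=\dim X_y=:d$ and $X_y\cong\PP^d$. Then $f\colon X\to Y$ is a smooth $\PP^d$-fibration over the rational manifold $Y$ (its fibers, being Fano manifolds with nef tangent bundle of Picard number $1$ deformation-equivalent to $X_y$, are all $\cong\PP^d$), hence a $\PP^d$-bundle by Proposition~\ref{Br}; and $g$ is proper onto a $d$-dimensional variety and contracts no curve of $R_f$, so Proposition~\ref{NO2} gives $X\cong\PP^d\times Y$, which is (iv) (the ``resp.''\ statement is symmetric, with $g,Z$ in place of $f,Y$). For (v), suppose instead $\dim Z=\dim X_y$ and $X_y\cong Q^d$, $d\in\{3,4\}$. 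Then $\dim Y=5-d\in\{1,2\}$, so $\dim X_z=5-d$, and $X_z$ is a Fano manifold with nef tangent bundle of Picard number $1$ and dimension $\le2$, forcing $X_z\cong\PP^{5-d}$ by Theorem~\ref{4}. Now $\dim Y=5-d=\dim X_z$, so the ``resp.''\ case of (iv) applies and gives $X\cong\PP^{5-d}\times Z$, in particular a $\PP^{5-d}$-bundle over $Z$. To pin down $Z$, I would observe that inside $X\cong\PP^{5-d}\times Z$ the submanifold $X_y\cong Q^d$ is collapsed by the projection $p$ onto $\PP^{5-d}$: $p|_{X_y}$ is a morphism out of a Picard-number-$1$ manifold, and it cannot have finite fibers because $\dim\PP^{5-d}=5-d<d=\dim X_y$, so $p|_{X_y}$ is constant; hence $X_y$ is a $d$-dimensional irreducible closed subvariety of a fiber $\{*\}\times Z\cong Z$, so $X_y=\{*\}\times Z$ and $Z\cong Q^d$. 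In particular $Z$ is $\PP^d$ or $Q^d$, which completes (v).

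I expect the only genuinely delicate step to be the final part of (v): excluding that $X_y\cong Q^d$ projects nontrivially onto the $\PP^{5-d}$ factor, which uses both the numerical inequality $d>5-d$ (valid since $d\ge3$) and the Picard-number-$1$ rigidity of $Q^d$. Everything else is bookkeeping with the quoted results, the load-bearing fact throughout being that $R_f$ and $R_g$ share no nonzero curve class.
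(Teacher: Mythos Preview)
Your argument is correct. Parts (i)--(iv) are essentially identical to the paper's proof: both invoke Theorem~\ref{sm} for (i)--(ii), the finiteness of $f|_{X_z}$ via $\rho_{X_z}=1$ for (iii), and Proposition~\ref{Br} together with Proposition~\ref{NO2} for (iv).

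The genuine difference is in (v). The paper proves it in one line by applying \cite[Proposition~8]{PS} (Paranjape--Srinivas) to the finite surjection $g|_{X_y}\colon Q^d\to Z$ to conclude $Z\cong\PP^d$ or $Q^d$, and then reads off the $\PP^{5-d}$-bundle structure from (ii) and Proposition~\ref{Br}. You instead bootstrap from the ``resp.'' case of (iv): since $\dim X_z=5-d\le2$ forces $X_z\cong\PP^{5-d}$, you get $X\cong\PP^{5-d}\times Z$ outright, and then the dimension inequality $d>5-d$ forces the projection $p|_{X_y}$ to be constant, so $X_y$ coincides with a fiber $\{\ast\}\times Z$ and $Z\cong Q^d$. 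Your route is more elementary (no appeal to \cite{PS}) and actually yields the sharper conclusions $Z\cong Q^d$ and $X\cong\PP^{5-d}\times Q^d$, rather than merely ``$Z$ is $\PP^d$ or $Q^d$'' and ``$X$ is a $\PP^{5-d}$-bundle.'' The paper's route has the virtue of being a direct one-step citation; yours stays internal to the tools already assembled in the paper.
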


\begin{proof} $\rm (i)$ Since $f$ and $g$ are contractions of extremal rays, $\rho_Y=\rho_X-1=\rho_Z$.

$\rm (ii)$ From Theorem~\ref{sm}, $Y$, $Z$, $X_y$ and $X_z$ are Fano manifolds with nef tangent bundles, and $\rho_{X_y}=\rho_{X_z}=1$. Hence Theorem~\ref{4} implies our assertion.

$\rm (iii)$ Since $f$ and $g$ are different contractions, $X_y$ and $X_z$ are not contracted by $g$ and $f$, respectively. Furthermore, we have $\rho_{X_y}=\rho_{X_z}=1$. This implies that $\dim Y \geq \dim X_z$ and $\dim Z \geq \dim X_y$. 

$\rm (iv)$ If $\dim Z = \dim X_y$ and $X_y \cong \PP^d$, then our claim follows from Proposition~\ref{Br} and Proposition~\ref{NO2}.

$\rm (v)$  We see that $Z \cong \PP^d$ or $Q^d$ by \cite[Proposition~8]{PS},  and it follows from $\rm (ii)$ and Proposition~\ref{Br} that $X$ is a $\PP^{5-d}$-bundle over $Z$.

\end{proof}

\subsection{Case where $\dim Y=1$ }

\begin{pro}\label{l1} If $\dim Y=1$, then $X$ is $ \PP^1 \times \PP^4$ or $\PP^1 \times Q^4$. 
\end{pro}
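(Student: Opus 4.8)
The plan is to exploit the two contractions $f\colon X\to Y$ and $g\colon X\to Z$ together with Lemma~\ref{l}. Since $\dim Y=1$, part (ii) of Lemma~\ref{l} forces $Y\cong\PP^1$, and then $X_z$, the fiber of $g$, satisfies $\dim X_z\le \dim Y=1$ by part (iii), so $X_z\cong\PP^1$ and $g\colon X\to Z$ is a smooth $\PP^1$-fibration; in particular $\dim Z=4$. Symmetrically, $f\colon X\to\PP^1$ has four-dimensional fibers $X_y$ which, by Theorem~\ref{sm}, are Fano 4-folds with nef tangent bundle of Picard number one, hence by Theorem~\ref{4} each $X_y$ is $\PP^4$ or $Q^4$. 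Because fibers of a smooth fibration over a connected base are all isomorphic (or at least deformation equivalent, which here pins down the type since $\PP^4$ and $Q^4$ are not deformation equivalent), the generic fiber type is constant; I would record this and split into the two cases accordingly.

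Next I would identify $Z$. By Lemma~\ref{l}(v), applied with the roles so that $\dim Y=\dim X_z$ is not quite the configuration we want — rather, the clean approach is: $g\colon X\to Z$ is a smooth $\PP^1$-fibration over the projective manifold $Z$, and $Z$ is itself, by Theorem~\ref{sm}(ii), a Fano 4-fold with nef tangent bundle. Moreover $f$ restricted to a fiber $X_z\cong\PP^1$ is finite onto $\PP^1$, so $f$ does not contract the ray of $g$, and hence $f$ induces on $Z$ a structure making $Z$ dominated by... more directly: since $\rho_X\ge 2$ and $X\to\PP^1$ has fibers $\PP^4$ or $Q^4$ of Picard number one, we get $\rho_X=2$, so $\rho_Z=1$. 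A Fano 4-fold with nef tangent bundle and $\rho=1$ is $\PP^4$ or $Q^4$ by Theorem~\ref{4}. So $Z\in\{\PP^4,Q^4\}$ and $X$ is a smooth $\PP^1$-fibration over $Z$; by Proposition~\ref{Br} (since $\PP^4$ and $Q^4$ are rational) $X=\PP_Z(\E)$ for a rank $2$ bundle $\E$.

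Now I would invoke Proposition~\ref{fb}: we have shown $X$ is a Fano $5$-fold with nef tangent bundle admitting a $\PP^1$-bundle structure $f$ over... wait, the $\PP^1$-bundle we have is $g\colon X\to Z$ with $Z\in\{\PP^4,Q^4\}$, not $f$. So Proposition~\ref{fb}(i),(ii) applies directly to $g$: if $Z=\PP^4$ then $X\cong\PP^1\times\PP^4$; if $Z=Q^4$ then $X\cong\PP^1\times Q^4$ or $X\cong\PP(\S_i)$. It remains to rule out $\PP(\S_i)$ under the hypothesis $\dim Y=1$. But $\PP(\S_i)\cong F(1,2,\PP^3)$ has $\rho=2$ and its two elementary contractions both have fibers of dimension... one maps to $\PP^3$ (three-dimensional target, $\PP^1$-bundle) and the other to the quadric $Q^4\cong G(1,\PP^3)$-type... in any case neither elementary contraction of $\PP(\S_i)$ has a one-dimensional target, since the two contractions go to $Q^4$ and to $\PP^3$ (or the relevant 3-fold), both of dimension $\ge 3$. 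Hence if $\dim Y=1$ the case $X=\PP(\S_i)$ cannot occur, leaving $X\cong\PP^1\times\PP^4$ or $\PP^1\times Q^4$.

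The main obstacle I anticipate is the bookkeeping around which of $f,g$ is the $\PP^1$-bundle and verifying that $\PP(\S_i)$ genuinely has no contraction onto a curve; this requires knowing the Mori cone of the flag manifold $F(1,2,\PP^3)$ explicitly, or arguing abstractly that a rational homogeneous $5$-fold with $\rho=2$ that fibers over a curve must be a product $\PP^1\times(\text{4-fold})$ — which follows since the curve factor of the Albanese-type/Tits decomposition would have to be $\PP^1$ and the fibration over it splits. I would present the argument in the cleaner form: $\dim Y=1\Rightarrow Y=\PP^1$, and the complementary contraction $g$ is a $\PP^1$-bundle over a $\rho=1$ Fano 4-fold $Z\in\{\PP^4,Q^4\}$; then Proposition~\ref{fb} finishes, with $\PP(\S_i)$ excluded because it admits no contraction with one-dimensional image.
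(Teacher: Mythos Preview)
Your argument is correct but follows a different route from the paper's. The paper argues directly from the fiber $X_y$ of $f$: after noting $Y\cong\PP^1$ and $\dim Z=\dim X_y=4$, it splits on whether $X_y\cong\PP^4$ or $X_y\cong Q^4$. In the first case Lemma~\ref{l}(iv) (i.e.\ Proposition~\ref{Br} together with Proposition~\ref{NO2}) immediately gives $X\cong\PP^4\times\PP^1$; in the second, Lemma~\ref{l}(v) yields that $X$ is a $\PP^1$-bundle over $Z$, and then a single application of Proposition~\ref{NO2} to this $\PP^1$-bundle (with the morphism $f:X\to\PP^1$ playing the role of the auxiliary map onto a $1$-dimensional target) forces $X\cong\PP^1\times Z$, hence $Z\cong X_y\cong Q^4$. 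You instead first pin down $Z\in\{\PP^4,Q^4\}$ via $\rho_Z=1$ and Theorem~\ref{4}, then invoke the heavier classification Proposition~\ref{fb}(i),(ii), and finally exclude $\PP(\S_i)\cong F(1,2;\PP^3)$ by checking that its two elementary contractions land on $Q^4$ and $(\PP^3)^{\vee}$, neither a curve. Both arguments are valid; the paper's is shorter and avoids the Fano-bundle classification of \cite{APW} underlying Proposition~\ref{fb}, while yours has the mild advantage of not needing the result of \cite{PS} hidden in Lemma~\ref{l}(v).
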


\begin{proof} By Lemma~\ref{l}~$\rm (ii)$, $Y \cong \PP^1$ and $X_y \cong \PP^4$ or $Q^4$. Furthermore, it follows from Lemma~\ref{l}~$\rm (iii)$ that $\dim Z=\dim X_y =4$. If $X_y \cong \PP^4$, then Lemma~\ref{l}~$\rm (iv)$ concludes that $X \cong \PP^1 \times \PP^4$. On the other hand, if $X_y \cong Q^4$, then Lemma~\ref{l}~$\rm (v)$ tells us that $X$ is a $\PP^{1}$-bundle over $Z$. Then, using Proposition~\ref{NO2}, we see that $X \cong \PP^1 \times Q^4$.  
\end{proof} 

\subsection{Case where $\dim Y=2$ }

\begin{pro} If $\dim Y=2$, then $X \cong \PP^2 \times \PP^3$, $\PP^2 \times Q^3$, $(\PP^1)^2 \times \PP^3$ or $(\PP^1)^2  \times Q^3$.  
\end{pro}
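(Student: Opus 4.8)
The plan is to run the same machinery as in the $\dim Y = 1$ case, now with $\dim Y = 2$ and hence $\dim Z = 2$ by Lemma~\ref{l}~(i), and to split according to the possibilities for $Y$ coming from Theorem~\ref{4}. By Lemma~\ref{l}~(ii), $Y$ is a Fano surface with nef tangent bundle of Picard number $\rho_Y = \rho_X - 1$, so $Y \cong \PP^2$ or $Y \cong (\PP^1)^2$; likewise $Z \cong \PP^2$ or $(\PP^1)^2$, and in fact $\rho_Y = \rho_Z$ forces $Y$ and $Z$ to be simultaneously $\PP^2$ (if $\rho_X = 3$) or simultaneously $(\PP^1)^2$ (if $\rho_X = 4$). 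The fibers $X_y$ of $f$ satisfy $\dim X_y = 5 - \dim Y = 3$ and are $\PP^3$ or $Q^3$ (Lemma~\ref{l}~(ii)); since $\dim Z = 2 < 3 = \dim X_y$, the ``diagonal'' shortcut of Lemma~\ref{l}~(iv),(v) does not directly apply, so I would instead use $g$ to produce extra structure on $X$ over $Y$.

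First, consider the case $Y \cong (\PP^1)^2$, so $\rho_X = 4$. Here the fiber $X_y$ is a $3$-fold, and by Proposition~\ref{P} (the classification via Proposition~\ref{NO}), a Fano $5$-fold with nef tangent bundle and $\rho_X = 4$ is one of $(\PP^1)^5$, $(\PP^1)^3\times\PP^2$, $(\PP^1)^2\times\PP(T_{\PP^2})$ — but the first two have $\rho = 5$ and the others have $\rho = 4$; actually $(\PP^1)^2 \times \PP(T_{\PP^2})$ has $\rho = 4$ and $(\PP^1)^3 \times \PP^2$ has $\rho = 5$, and none of these admits a contraction onto a $2$-dimensional base with $3$-dimensional fibers of the required form unless $X$ is a product $(\PP^1)^2 \times V$ with $V$ a Fano $3$-fold with nef tangent bundle. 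So in this case I would argue that $X \cong (\PP^1)^2 \times V$ where $V$ is $\PP^3$ or $Q^3$ — but wait, $\PP^3$ and $Q^3$ have $\rho = 1$, giving $\rho_X = 3$, not $4$; so this case is in fact vacuous, i.e. $\dim Y = 2$ with $\rho_X = 4$ cannot occur because then $X_y$ would have $\rho_{X_y} = 1$, $\dim X_y = 3$, forcing $X_y \in \{\PP^3, Q^3\}$, yet $\rho_X = \rho_Y + 1 = 3$. So actually $\rho_X = 3$ necessarily, $Y \cong Z \cong \PP^2$, and only the first alternative remains to analyze. Hmm — let me re-examine: $\rho_Y = \rho_X - 1$, and $Y$ is a Fano surface with nef $T_Y$, so $\rho_Y \in \{1,2\}$; if $\rho_Y = 2$ then $Y \cong (\PP^1)^2$ and $\rho_X = 3$, if $\rho_Y = 1$ then $Y \cong \PP^2$ and $\rho_X = 2$. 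Both subcases are live, and I would treat them in turn.

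In the subcase $Y \cong \PP^2$ (so $\rho_X = 2$, $\rho_Z = 1$, hence $\dim Z \geq \dim X_y = 3$, but $\dim Z \le 4$ and $\dim Z = 5 - \dim X_z$): I would use the other contraction $g: X \to Z$ to pin down $X$ as a $\PP$-bundle over $Y = \PP^2$ via Proposition~\ref{Br}, write $X = \PP_Y(\E)$ for a rank $3$ or rank $4$ bundle $\E$ on $\PP^2$ (according as $X_y \cong \PP^3$ or $Q^3$ — in the quadric case one must first see that a smooth quadric bundle of relative dimension $3$ over $\PP^2$ with nef $T_X$ and the right Picard number is a projective bundle, using Lemma~\ref{l}~(v) applied with the roles of $Y$ and $Z$ reversed once we know $\dim Y = \dim X_z$). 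Actually the cleanest route: since $\dim Z \le 4$ and $\rho_Z = 1$, $Z$ is one of $\PP^3, Q^3, \PP^4, Q^4$; comparing $\dim Z + \dim X_z$ with the structure of $X$ and using $\dim Y = 2$, one gets $\dim X_z = 5 - \dim Z \in \{1,2\}$, so $X_z \cong \PP^1$ or $\PP^2$ (it cannot be $Q^d$ for $d \le 2$). If $X_z \cong \PP^2$ then $\dim Z = 3$, $\dim Y = 2 = \dim X_z$, and Lemma~\ref{l}~(iv),(v) applied to $g$ gives $X \cong \PP^2 \times Z$ with $Z \in \{\PP^3, Q^3\}$, i.e. $X \cong \PP^2 \times \PP^3$ or $\PP^2 \times Q^3$. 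If $X_z \cong \PP^1$ then $g$ is a $\PP^1$-bundle over a $4$-fold $Z \in \{\PP^4, Q^4\}$ with nef $T_X$, and Proposition~\ref{fb}~(i),(ii) classifies $X$ — but $\PP^1 \times \PP^4$ and $\PP^1 \times Q^4$ have $\rho = 2$ and the base $Y$ of the other contraction would be $\PP^4$ or $Q^4$, not $\PP^2$, contradiction; and $\PP(\S_i)$ has $\rho = 2$ with its non-$\PP^1$-bundle contraction landing on $Q^4$, again not $\PP^2$. So $X_z \cong \PP^1$ is impossible here, leaving only $X \cong \PP^2 \times \PP^3$ or $\PP^2 \times Q^3$.

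In the subcase $Y \cong (\PP^1)^2$ (so $\rho_X = 3$, $\rho_Z = 2$, hence $Z \cong (\PP^1)^2$ as well, and $\dim X_y = \dim X_z = 3$ with $X_y, X_z \in \{\PP^3, Q^3\}$): now $\dim Z = 2 < 3 = \dim X_y$, so I would argue directly with the bundle structure of $f$. Write $X = \PP_Y(\E)$ over $Y = (\PP^1)^2$ (resp. a quadric bundle) using Proposition~\ref{Br}. Restricting to the two rulings $l_1, l_2$ of $(\PP^1)^2$, Proposition~\ref{normal} shows each $\PP(\E|_{l_i})$ is a Fano $4$-fold with nef tangent bundle admitting a $\PP^3$- or $Q^3$-bundle structure over $\PP^1$, so by Theorem~\ref{4} it is $\PP^1 \times \PP^3$ or $\PP^1 \times Q^3$ (the only $4$-folds in the list fibered that way over $\PP^1$; note $\PP^1 \times Q^3$ is genuinely a smooth quadric bundle over $\PP^1$). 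Hence $\E|_{l_i}$ is trivial up to twist for both $i$, and exactly as in the proof of Proposition~\ref{fb}~(iii) — Grauert's theorem plus Nakayama — one deduces $\E \cong p_i^*(\E|_{F}) $ and ultimately $X \cong \PP^1 \times W$ for $W = \PP(\E|_F)$ a Fano $4$-fold with nef $T_W$ that is itself a $\PP^3$- or $Q^3$-bundle over $\PP^1$; again Theorem~\ref{4} forces $W \cong \PP^1 \times \PP^3$ or $\PP^1 \times Q^3$, so $X \cong (\PP^1)^2 \times \PP^3$ or $(\PP^1)^2 \times Q^3$. Combining the two subcases yields the four listed possibilities.

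The main obstacle I anticipate is handling the quadric-fibration cases cleanly: when $X_y \cong Q^3$, one does not immediately get a $\PP$-bundle structure for $f$, so to apply the Proposition~\ref{normal} + Grauert argument one must first convert $f$ into a projective bundle, and the right tool is Lemma~\ref{l}~(v) (with the roles of $Y, Z$ adjusted) together with Proposition~\ref{Br}, together with the observation that $\PP^1 \times Q^3$ really does occur as a smooth quadric bundle in Theorem~\ref{4}'s list. Bookkeeping of Picard numbers to rule out the ``wrong base'' cases (e.g. excluding $\PP(\S_i)$, $\PP^1 \times \PP^4$, $\PP^1 \times Q^4$ when the prescribed base has the wrong dimension) is the other place where care is needed, but it is routine once one tracks $\rho$ and the dimensions of bases and fibers.
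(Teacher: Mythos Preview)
Your treatment of the subcase $Y\cong\PP^2$ is essentially the paper's argument and is fine: once you note $\rho_Z=1$ and $\dim Z\in\{3,4\}$, the split into $X_z\cong\PP^2$ (use Lemma~\ref{l}~(iv)) versus $X_z\cong\PP^1$ (use Proposition~\ref{fb}~(i),(ii) and exclude by base dimension) is exactly what the paper does.

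The subcase $Y\cong(\PP^1)^2$ contains a genuine error. You assert $Z\cong(\PP^1)^2$, i.e.\ $\dim Z=2$, but Lemma~\ref{l}~(i) only gives $\rho_Z=\rho_Y$, not $\dim Z=\dim Y$; in fact Lemma~\ref{l}~(iii) forces $\dim Z\ge\dim X_y=3$, so $\dim Z\in\{3,4\}$. This is precisely where the paper's argument diverges from yours: one first excludes $\dim Z=3$ (because Lemma~\ref{l}~(iv),(v) would then force $Z\cong\PP^3$ or $Q^3$, contradicting $\rho_Z=2$), concludes $\dim Z=4$, and hence $g$ is a $\PP^1$-bundle over a Fano $4$-fold $Z$ with $\rho_Z=2$. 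Now Proposition~\ref{fb}~(iii)--(v) lists all possibilities for $X$, and only $(\PP^1)^2\times\PP^3$ and $(\PP^1)^2\times Q^3$ admit an extremal contraction onto $(\PP^1)^2$. This bypasses the quadric-fibration difficulty entirely.

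Your alternative route---working directly with $f\colon X\to(\PP^1)^2$ via restriction to rulings and Grauert---does go through when $X_y\cong\PP^3$, but it breaks down when $X_y\cong Q^3$: there is no rank-$4$ bundle $\E$ with $X=\PP_Y(\E)$ in that case, so the sentences ``$\E|_{l_i}$ is trivial up to twist'' and ``$\E\cong p_i^*(\E|_F)$'' are not meaningful. Your suggested patch, invoking Lemma~\ref{l}~(v), requires $\dim Z=\dim X_y=3$, which both contradicts your own assumption $\dim Z=2$ and, once corrected, is exactly the case that gets excluded by the Picard-number argument above. The missing idea is simply to use the \emph{other} contraction $g$ rather than $f$ in this subcase.
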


\begin{proof}
By Lemma~\ref{l}, we see that $Y \cong \PP^2$ or $(\PP^1)^2$, $X_y \cong \PP^3$ or $Q^3$ and $\dim Z=3$ or $4$, in a similar way to Proposition~\ref{l1}.

If $Y \cong \PP^2$ and $\dim Z=3$, then we have $\dim Y= \dim X_z$ and it follows from Lemma~\ref{l}~$\rm (ii)$ that $X_z \cong \PP^2$. Therefore Lemma~\ref{l}~$\rm (iv)$ implies that $X \cong \PP^2 \times \PP^3$ or $\PP^2 \times Q^3$.

If $Y \cong \PP^2$ and $\dim Z=4$, then $X$ is a $\PP^1$-bundle over $\PP^4$ or $Q^4$ by Lemma~\ref{l}~$\rm (ii)$ and Proposition~\ref{Br}. Therefore we are in the situation of Proposition~\ref{fb}~$\rm (i)$ and $\rm (ii)$. However every manifold appeared there has no contractions to $\PP^2$. Hence we get a contradiction.

If $Y \cong (\PP^1)^2$, then it follows from Lemma~\ref{l}~$\rm (i)$ that $\rho_Z=\rho_Y=2$. By virtue of Lemma~\ref{l}~${\rm (ii)}$, $X_y \cong \PP^3$ or $Q^3$. If $\dim Z=3$, then $Z$ would be isomorphic to $\PP^3$ or $Q^3$ by Lemma~\ref{l}~$\rm (iv)$ and $\rm (v)$. This contradicts to $\rho_Z=2$. Hence $\dim Z=4$. Then, it follows from Lemma~\ref{l}~$\rm (ii)$ and Proposition~\ref{Br} that $X$ is a $\PP^1$-bundle over $Z$, where $Z \cong \PP^1 \times \PP^3,  \PP^1 \times Q^3$, $(\PP^2)^2$ or $\PP(\N)$. Thus we are in the situation of Proposition~\ref{fb}~$\rm (iii)-\rm (v)$. Since $X$ admits a contraction of an extremal ray to $Y \cong (\PP^1)^2$, we see that $X  \cong (\PP^1)^2 \times \PP^3$ or $(\PP^1)^2 \times Q^3$. 
\end{proof}

\subsection{Case where $\dim Y=3$ }

\begin{pro} If $\dim Y=3$, then $X \cong \PP(T_{\PP^3})$, $\PP(\S_i)$, $\PP^1 \times (\PP^2)^2$, $\PP^2 \times \PP(T_{\PP^2})$ or $(\PP^1)^3 \times \PP^2$.  
\end{pro}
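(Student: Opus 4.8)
The plan is to exploit the contraction $f$. Since $\dim X_y = 5 - \dim Y = 2$ and, by Lemma~\ref{l}~(ii), $X_y$ is a $\PP^d$ or a $Q^d$, we must have $X_y \cong \PP^2$; thus $f$ is a smooth $\PP^2$-fibration over the three-fold $Y$, which by Lemma~\ref{l}~(ii) is one of $\PP^3$, $Q^3$, $\PP^1\times\PP^2$, $\PP(T_{\PP^2})$, $(\PP^1)^3$. From Lemma~\ref{l}~(iii) together with $\dim Z\ge\dim Y$ we get $\dim Z\in\{3,4\}$, and $\rho_Z=\rho_Y$ by Lemma~\ref{l}~(i). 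Throughout I use that every base appearing below is rational, so that a smooth $\PP$-fibration over it is a $\PP$-bundle (Proposition~\ref{Br}), together with the elementary observation that whichever manifold turns out to be $X$ must actually carry a smooth $\PP^2$-fibration onto a three-fold of the prescribed isomorphism type. I would treat the subcases $\dim Z=3$ and $\dim Z=4$ separately.

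Consider first $\dim Z=3$. Then $X_z\cong\PP^2$ as well, $g$ is a smooth $\PP^2$-fibration, and $\dim Y+\dim Z=6=\dim X+1$, so Proposition~\ref{WO} applies. Its first alternative gives $Y\cong Z\cong\PP^3$ and $X\cong\PP(T_{\PP^3})$, and this is the only way it can occur since $\PP^3$ is the only three-fold in our list of the form $\PP^m$. In its second alternative $Y$ carries a $\PP$-bundle structure over a smooth curve $C$; among our five possibilities only $\PP^1\times\PP^2$ does, because $Q^3$ has Picard number one, both elementary contractions of $\PP(T_{\PP^2})$ land on $\PP^2$, and a fibre of $(\PP^1)^3\to\PP^1$ is $(\PP^1)^2\not\cong\PP^2$; hence $C\cong\PP^1$, and applying the same exclusion to $Z$ forces $Z\cong\PP^1\times\PP^2$, so $X\cong\PP^1\times(\PP^2)^2$. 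Thus this subcase produces only $\PP(T_{\PP^3})$ and $\PP^1\times(\PP^2)^2$.

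Now consider $\dim Z=4$. Then $X_z\cong\PP^1$, $g$ is a smooth $\PP^1$-fibration over a four-fold $Z$, and $X=\PP_Z(\F)$ for a rank-$2$ bundle $\F$. If $\rho_Y=1$, then $Z\cong\PP^4$ or $Q^4$ and Proposition~\ref{fb}~(i),(ii) give $X\in\{\PP^1\times\PP^4,\ \PP^1\times Q^4,\ \PP(\S_i)\}$. If $\rho_Y=2$, then $Z$ is one of $\PP^1\times\PP^3$, $\PP^1\times Q^3$, $(\PP^2)^2$, $\PP(\N)$, and Proposition~\ref{fb}~(iii)--(v) lists the possibilities for $X$. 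If $\rho_Y=3$, then $Y\cong(\PP^1)^3$, so $\rho_X=4$ and Proposition~\ref{P} gives $X\in\{(\PP^1)^3\times\PP^2,\ (\PP^1)^2\times\PP(T_{\PP^2})\}$ (the option $(\PP^1)^5$ being ruled out by $\rho_X=4$). For each of these finitely many candidates I would write down its elementary contractions and discard those with no contraction whose image is a three-fold of the required type: this removes $\PP^1\times\PP^4$, $\PP^1\times Q^4$, $(\PP^1)^2\times\PP^3$, $(\PP^1)^2\times Q^3$, $\PP^1\times\PP(\N)=\PP^1\times\PP(\S)$ and $(\PP^1)^2\times\PP(T_{\PP^2})$, leaving exactly $\PP(\S_i)$ (with $Y\cong\PP^3$), $\PP^1\times(\PP^2)^2$ (with $Y\cong\PP^1\times\PP^2$), $\PP^2\times\PP(T_{\PP^2})$ (with $Y\cong\PP(T_{\PP^2})$) and $(\PP^1)^3\times\PP^2$ (with $Y\cong(\PP^1)^3$).

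Together the two subcases give the asserted list. I expect the main difficulty to lie in the bookkeeping of the last subcase: one must know, for each homogeneous five-fold surviving Propositions~\ref{fb} and~\ref{P}, the exact set of its elementary contractions along with the dimensions and isomorphism types of their targets, because the whole elimination rests on deciding which of them admits a smooth $\PP^2$-fibration over a three-fold of the right form. A smaller but essential point is checking that $Q^3$, $\PP(T_{\PP^2})$ and $(\PP^1)^3$ admit no $\PP$-bundle structure over a curve, which is what collapses the fibre-product alternative of Proposition~\ref{WO} for those bases.
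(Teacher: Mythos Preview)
Your proof is correct and follows essentially the same approach as the paper: split into the cases $\dim Z=3$ (handled via Proposition~\ref{WO}) and $\dim Z=4$ (handled via Proposition~\ref{fb}), then eliminate candidates lacking a contraction to a three-fold. The only organizational difference is that the paper invokes Proposition~\ref{P} at the outset to reduce to $\rho_X\le 3$ (hence $\rho_Y=\rho_Z\le 2$) before splitting on $\dim Z$, whereas you fold the high-Picard-number case $\rho_Y=3$ into your $\dim Z=4$ analysis; both routes reach the same list.
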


\begin{proof} According to Proposition~\ref{P}, $\rho_X \leq 3$ if $X$ is not isomorphic to $(\PP^1)^5$, $(\PP^1)^3 \times \PP^2$ or $(\PP^1)^2 \times \PP(T_{\PP^2})$. Since $(\PP^1)^5$ and $(\PP^1)^2 \times \PP(T_{\PP^2})$ have no contractions of extremal rays to $3$-dimensional manifolds, we have $X \cong (\PP^1)^3 \times \PP^2$ or $\rho_X \leq 3$. So it is enough to consider the case where $\rho_X \leq 3$. Then it follows from Lemma~\ref{l}~$\rm (i)$ that $\rho_Y=\rho_Z \leq 2$. By our assumption, we see that $5> \dim Z \geq \dim Y =3$.  

If  $\dim Z=3$, then it follows from Lemma~\ref{l}~$\rm (ii)$ and Proposition~\ref{Br} that $X$ admits two different $\PP^2$-bundle structures. By Proposition~\ref{WO}, $X =\PP(T_{\PP^3})$ or $Y \times_CZ$, where $Y$ and $Z$ are $\PP^2$-bundles over a smooth curve $C$. In the latter case, since $Y$ and $Z$ are projective bundles of $\rho = 2$, it follows from Theorem~\ref{4}~$\rm (iii)$ that $Y \cong Z \cong \PP^1 \times \PP^2$ and $C \cong \PP^1$. Therefore, $X \cong (\PP^1 \times \PP^2) \times_{\PP^1}(\PP^1 \times \PP^2) \cong \PP^1 \times (\PP^2)^2$.     

If  $\dim Z=4$, then Lemma~\ref{l}~$\rm (ii)$ and Proposition~\ref{Br} imply that $X$ is a $\PP^1$-bundle over $\PP^4$, $Q^4$, $\PP^1 \times \PP^3$, $\PP^1 \times Q^3$, $(\PP^2)^2$ or $\PP(\N)$. Therefore we are in the situation of Proposition~\ref{fb}~$\rm (i)-(v)$. Since $X$ admits a contraction of an extremal ray to a $3$-dimensional manifold $Y$, $X$ is $\PP(\S_i)$, $\PP^1 \times (\PP^2)^2$ or $\PP^2 \times \PP(T_{\PP^2})$.

\end{proof}

\subsection{Case where $\dim Y=4$ }

\begin{pro} If $\dim Y=4$, then $X$ is isomorphic to one of the following:\\
$(\PP^1)^5$, $(\PP^1)^3 \times \PP^2$, $(\PP^1)^2 \times \PP(T_{\PP^2})$, $(\PP^1)^2 \times \PP^3$, $(\PP^1)^2 \times Q^3$, $\PP^1 \times \PP(\N)=\PP^1 \times \PP(\S)$, $\PP^2 \times \PP(T_{\PP^2})$. 
\end{pro}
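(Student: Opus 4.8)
The plan is to use the hypothesis $\dim Y=4$ to pin down both contractions very precisely, and then feed the outcome into Propositions~\ref{fb} and \ref{P}. First, since $g$ is of fiber type by Theorem~\ref{sm}, one has $\dim Z\le 4$, so the running assumption $\dim Z\ge\dim Y=4$ forces $\dim Z=4$ as well; hence $f$ and $g$ both have one-dimensional fibers. By Lemma~\ref{l}~$\rm (ii)$ this gives $X_y\cong X_z\cong\PP^1$, while $Y$ and $Z$ must be among the nine $4$-dimensional manifolds listed in Theorem~\ref{4}~$\rm (iv)$. Each of these is rational, so Proposition~\ref{Br} realizes $f$ as a $\PP^1$-bundle $X=\PP_Y(\E)$ (and likewise $g$ as a $\PP^1$-bundle over $Z$). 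The structural fact to carry along is that $X$ admits \emph{two} distinct $\PP^1$-bundle structures, equivalently two distinct extremal contractions onto $4$-folds.

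Next I would split according to $\rho_Y$. If $\rho_Y\ge 3$, then $\rho_X=\rho_Y+1\ge 4$, and Proposition~\ref{P} immediately gives that $X$ is one of $(\PP^1)^5$, $(\PP^1)^3\times\PP^2$, $(\PP^1)^2\times\PP(T_{\PP^2})$, all of which appear in the statement. If $\rho_Y=2$, then $Y$ is one of $\PP^1\times\PP^3$, $\PP^1\times Q^3$, $(\PP^2)^2$, $\PP(\N)$, and Proposition~\ref{fb}~$\rm (iii)$--$\rm (v)$ yields that $X$ is one of $(\PP^1)^2\times\PP^3$, $\PP^1\times\PP(\N)$, $(\PP^1)^2\times Q^3$, $\PP^1\times\PP(\S)$, $\PP^1\times(\PP^2)^2$, $\PP^2\times\PP(T_{\PP^2})$. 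If $\rho_Y=1$, then $Y\cong\PP^4$ or $Q^4$, and Proposition~\ref{fb}~$\rm (i)$--$\rm (ii)$ gives $X\in\{\PP^1\times\PP^4,\ \PP^1\times Q^4,\ \PP(\S_i)\}$.

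Finally I would discard the candidates that lack a second extremal contraction onto a $4$-fold, since that contradicts the structure established above. For $\PP^1\times\PP^4$ and $\PP^1\times Q^4$ the only fiber-type contraction besides the projection to the $4$-dimensional factor is the projection to $\PP^1$; for $\PP(\S_i)\cong F(1,2;\PP^3)$ the second elementary contraction is the $\PP^2$-bundle onto $\PP^3$, a $3$-fold; and for $\PP^1\times(\PP^2)^2$ the two contractions other than the projection to $(\PP^2)^2$ both land on the $3$-fold $\PP^1\times\PP^2$. Hence $\PP^1\times\PP^4$, $\PP^1\times Q^4$, $\PP(\S_i)$ and $\PP^1\times(\PP^2)^2$ are excluded, and precisely the seven manifolds in the statement remain (recalling $\PP^1\times\PP(\N)=\PP^1\times\PP(\S)$). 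I expect the main obstacle to be exactly this last bookkeeping: one must inspect the Mori cones — equivalently, the product or flag structures — of the borderline examples carefully enough to confirm that each of them admits only one $\PP^1$-bundle structure; everything else is a direct appeal to Propositions~\ref{Br}, \ref{fb} and \ref{P}.
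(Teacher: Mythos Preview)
Your argument is correct and follows essentially the same route as the paper: reduce to $\rho_X\le 3$ via Proposition~\ref{P}, observe that both $f$ and $g$ are $\PP^1$-bundles over $4$-folds by Lemma~\ref{l} and Proposition~\ref{Br}, run through Proposition~\ref{fb}, and discard the outputs that fail to carry a second $\PP^1$-bundle structure over a $4$-fold. The only cosmetic difference is that you split the casework by $\rho_Y\in\{1,2,\ge 3\}$ and spell out the elimination of $\PP^1\times\PP^4$, $\PP^1\times Q^4$, $\PP(\S_i)$ and $\PP^1\times(\PP^2)^2$ explicitly, whereas the paper bundles all of this into the single sentence ``since $X$ admits two different $\PP^1$-bundle structures over $4$-folds $Y$ and $Z$ of $\rho\le 2$''.
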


\begin{proof}
According to Proposition~\ref{P}, $\rho_X \leq 3$ if $X$ is not isomorphic to $(\PP^1)^5$, $(\PP^1)^3 \times \PP^2$ or $(\PP^1)^2 \times \PP(T_{\PP^2})$. So it is enough to consider the case where $\rho_X \leq 3$. Then it is equivalent to $\rho_Y=\rho_Z \leq 2$.
Lemma~\ref{l}~$\rm (ii)$ and Proposition~\ref{Br} imply that $X$ admits two different $\PP^1$-bundle structures over $4$-folds $Y$ and $Z$ of $\rho \leq 2$. By Lemma~\ref{l}~$\rm (ii)$, $Y$ and $Z$ are $\PP^4$, $Q^4$, $\PP^1 \times \PP^3$, $\PP^1 \times Q^3$, $(\PP^2)^2$ or $\PP(\N)$. Therefore we are in the situation of Proposition~\ref{fb}~$\rm (i)-(v)$. Since $X$ admits two different $\PP^1$-bundle structures over $4$-folds $Y$ and $Z$ of $\rho \leq 2$, $X$ is $(\PP^1)^2 \times \PP^3$, $(\PP^1)^2 \times Q^3$, $\PP^1 \times \PP(\N)=\PP^1 \times \PP(\S)$ or $\PP^2 \times \PP(T_{\PP^2})$.

\end{proof}

\section{Case where $\rho_X=1$}\label{1}

Finally, we deal with Fano manifolds with nef tangent bundles of $\rho_X=1$. All the results in this section are well-known for experts. 

\begin{them}\label{} Let $X$ be a smooth Fano $n$-fold with nef tangent bundle of $\rho_X=1$. Then the pseudoindex $i_X$ satisfies $3 \leq i_X \leq n+1$. Furthermore, the following holds.
\begin{enumerate}
\item If $i_X=n+1$, then $X$ is $\PP^n$.
\item If $i_X=n$, then $X$ is $Q^n$.
\item If $i_X=3$, then $X$ is $\PP^2$, $Q^3$ or $K(G_2)$, where $K(G_2)$ is the $5$-dimensional contact homogeneous manifold of type $G_2$.
\end{enumerate}
\end{them}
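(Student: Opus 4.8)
The plan is to recast the statement in the language of varieties of minimal rational tangents (VMRT). The point of departure is that, since $T_X$ is nef, for every rational curve with normalisation $f\colon\PP^1\to X$ the bundle $f^*T_X$ is nef, hence globally generated; thus \emph{every} rational curve on $X$ is free and moves in a family dominating $X$. Consequently the minimal anticanonical degree $i_X$ is already attained by a minimal dominating family $\mathcal H$, and a general member $C$ through a general point $x$ carries the standard splitting $f^*T_X\cong\O_{\PP^1}(2)\oplus\O_{\PP^1}(1)^{\oplus(i_X-2)}\oplus\O_{\PP^1}^{\oplus(n+1-i_X)}$; in particular the VMRT $\mathcal C_x\subset\PP(T_xX)$ at a general point has dimension $i_X-2$. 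This dictionary converts the trichotomy $i_X\in\{n+1,n,3\}$ into one for $\dim\mathcal C_x$.

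For the numerical bounds: $i_X\le n+1$ is Mori's bend-and-break \cite{Mori}, which supplies through every point a rational curve of anticanonical degree at most $n+1$. For the lower bound, Lemma~\ref{pi} already gives $i_X\ge 2$, so it remains to exclude $i_X=2$. If $i_X=2$, the splitting above becomes $f^*T_X\cong\O_{\PP^1}(2)\oplus\O_{\PP^1}^{\oplus(n-1)}$ (an $\O_{\PP^1}(1)^{\oplus2}$ summand is impossible, since it could not contain $T_{\PP^1}$), so every member of $\mathcal H$ is a smooth conic with \emph{trivial} normal bundle $N_{C/X}\cong\O_{\PP^1}^{\oplus(n-1)}$. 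Then $\mathcal H$ is unsplit and parametrised by a smooth proper variety $B$ of dimension $n-1$; since the evaluation map $H^0(C,N_{C/X})\xrightarrow{\ \sim\ }(N_{C/X})_x$ is an isomorphism for every $(C,x)$, the natural morphism $u\colon\mathcal U\to X$ from the universal family is \'etale, hence --- $X$ being Fano and thus simply connected --- an isomorphism. Therefore $X\cong\mathcal U$ admits a smooth $\PP^1$-fibration over the $(n-1)$-dimensional base $B$, and as $n\ge 2$ this forces $\rho_X\ge 2$, a contradiction; hence $i_X\ge 3$.

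It then remains to invoke three recognition results. If $i_X=n+1$, then $X\cong\PP^n$ by the characterisation of projective space \cite{CMSB}. If $i_X=n$, then $X\cong Q^n$ by \cite{CMSB} together with \cite{Mi}. If $i_X=3$, then $\dim\mathcal C_x=1$, so $X$ is a Fano manifold of Picard number one with nef tangent bundle whose VMRT at a general point is a curve; Mok's classification \cite{Mok} of such manifolds then yields $X\cong\PP^2$, $Q^3$ or $K(G_2)$, the only rational homogeneous spaces of Picard number one with one-dimensional VMRT, of dimensions $2$, $3$ and $5$. In particular this rules out $i_X=3$ for $n=4$, consistently with Theorem~\ref{4}.

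I expect the case $i_X=3$ to be the real obstacle: unlike (i) and (ii) it is not a formal consequence of a classical characterisation but depends on the full VMRT machinery --- Mok's argument applies the Cartan--Fubini extension principle to recover $K(G_2)$ (and $Q^3$, $\PP^2$) from the projective geometry of its one-dimensional VMRT, and it is precisely at that step that nefness of $T_X$ is used, to force the VMRT to be smooth and as positive as possible. The lower bound $i_X\ge 3$ is the second, milder place where nefness is indispensable: a general Fano manifold of Picard number one can perfectly well have $i_X=2$ (for instance the cubic threefold), and it is exactly the triviality of the normal bundles of the minimal curves, imposed by nefness, that produces the fibration obstructing $\rho_X=1$.
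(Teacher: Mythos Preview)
Your argument is essentially the paper's: you spell out in full the \'etale-cover/$\PP^1$-fibration argument excluding $i_X=2$ that the paper delegates to \cite[before Theorem~4.3]{Hwang}, and you invoke \cite{CMSB} and \cite{Mi} for (i) and (ii) exactly as the paper does. The one point worth flagging is that for (iii) the paper cites \cite[Theorem~4.3]{Hwang} rather than \cite{Mok} alone; Hwang's statement is the completed form of the one-dimensional VMRT classification (building on Mok), so that is the more accurate reference here.
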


\begin{proof} By virtue of Lemma~\ref{pi}, we see that $2 \leq i_X$. Furthermore, it follows from the argument as in \cite[Before Theorem~4.3, P. 623]{Hwang} that $i_X$ is not $2$. On the other hand, if $i_X \geq n+1$, then $X$ is $\PP^n$. This is dealt in \cite{CMSB}. If $i_X=n$, then our assertion follows from \cite{Mi}. The case where $i_X=3$ is treated in \cite[Theorem~4.3]{Hwang}.  

\end{proof}

As a consequence, we have the following:

\begin{cor}\label{} Let $X$ be a smooth Fano $5$-fold with nef tangent bundle of $\rho_X=1$. Then one of the following holds.
\begin{enumerate}
\item $X$ is $\PP^5$, $Q^5$ or $K(G_2)$.
\item $i_X=4$
\end{enumerate}
\end{cor}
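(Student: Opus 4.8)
The plan is to deduce this directly from Theorem~\ref{} (the classification of Fano $n$-folds with nef tangent bundle and $\rho_X=1$ by pseudoindex), specializing to $n=5$. First I would recall from that theorem that the pseudoindex of such an $X$ satisfies $3 \le i_X \le n+1 = 6$, so the only possible values are $i_X \in \{3,4,5,6\}$. This immediately splits the argument into four cases, one of which ($i_X=4$) is exactly alternative (ii) and needs no further work.

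Next I would dispatch the three remaining cases using the explicit part of Theorem~\ref{}. If $i_X = 6 = n+1$, part (i) of that theorem gives $X \cong \PP^5$; if $i_X = 5 = n$, part (ii) gives $X \cong Q^5$. The only case requiring a small extra remark is $i_X = 3$: part (iii) of Theorem~\ref{} allows $X$ to be $\PP^2$, $Q^3$, or $K(G_2)$, but the first two have dimension $2$ and $3$ respectively, so since $\dim X = 5$ we must have $X \cong K(G_2)$. Collecting the outcomes, either $X$ is one of $\PP^5$, $Q^5$, $K(G_2)$, or else $i_X = 4$, which is precisely the dichotomy asserted.

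There is essentially no obstacle here: the corollary is a bookkeeping consequence of the preceding theorem, and the only point needing care is the dimension check that eliminates $\PP^2$ and $Q^3$ from the $i_X=3$ list. I would keep the write-up to two or three sentences, simply invoking the theorem case by case.

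\begin{proof}
By Theorem~\ref{} applied with $n=5$, the pseudoindex satisfies $3 \le i_X \le 6$. If $i_X = 6$ then $X \cong \PP^5$, and if $i_X = 5$ then $X \cong Q^5$. If $i_X = 3$, then $X$ is $\PP^2$, $Q^3$ or $K(G_2)$; since $\dim X = 5$, the first two possibilities are excluded and $X \cong K(G_2)$. The only remaining possibility is $i_X = 4$. This proves the assertion.
\end{proof}
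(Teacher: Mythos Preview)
Your proof is correct and follows exactly the approach the paper intends: the corollary is stated immediately after the theorem with the phrase ``As a consequence, we have the following'' and no written proof, so the argument is precisely the case-by-case specialization to $n=5$ that you give, including the dimension check ruling out $\PP^2$ and $Q^3$ when $i_X=3$.
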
 

\begin{rem} \rm Let $X$ be a smooth Fano $5$-fold with nef tangent bundle of $\rho_X=1$. For the ample generator $H$ of ${\rm Pic}(X)$, if there exists a rational curve $l$ such that $H.l=1$, then we see that the Fano index coincides with the pseudoindex $i_X=4$. Hence, it turns out that $X$ is a Fano $5$-fold with index $4$. In other words, $X$ is a del Pezzo $5$-fold. 

On the other hand, a rational homogeneous manifold of $\rho=1$ contains a line (see for instance \cite[V.1.15]{Ko}). Furthermore, we see that there is no rational homogeneous $5$-fold of $\rho=1$ with $i_X=4$.
\end{rem} \
{\bf Acknowledgements}
The author would like to thank Dr. Kazunori Yasutake for reading this paper and his comments. He also would like to express his gratitude to referees for their careful reading of the text and useful suggestions and comments. The author is partially supported by the Grant-in-Aid for Research Activity Start-up $\sharp$24840008 from the Japan Society for the Promotion of Science.


\begin{thebibliography}{7}

\bibitem{APW} V. Ancona, T. Peternell, J. A. Wi\'sniewski, {\it  Fano bundles and splitting theorems on projective spaces and quadrics}, Pacific J. Math. 163 (1994), no. 1, 17-42.
\bibitem{Borel} A. Borel, Cohomologie des espaces localement compacts d'aprs J. Leray. Lecture Notes in Mathematics, Vol. 2 Springer-Verlag, Berlin-G${\rm \ddot{o}}$ttingen-Heidelberg, 1964.
\bibitem{BCDD} Bonavero, L., Casagrande, C., Debarre, O., Druel, S., {\it Sur une conjecture de Mukai}, Com- ment. Math. Helv. 78 (2003), 601-626.
\bibitem{Casa} C. Casagrande, {\it Quasi-elementary contractions of Fano manifolds}, Compos. Math. 144 (2008), no. 6, 1429-1460.
\bibitem{CP} F. Campana, T. Peternell, {\it Projective manifolds whose tangent bundles are numerically effective}, Math. Ann. 289 (1991), 169-187.
\bibitem{CP2} F. Campana, T. Peternell, {\it 4-folds with numerically effective tangent bundles and second Betti numbers greater than one}, Manuscripta Math. 79 (1993), no. 3-4, 225-238.
\bibitem{CMSB} K. Cho, Y. Miyaoka and N. I. Shepherd-Barron, {\it Characterizations of projective space and applications to complex symplectic manifolds}, in {\it Higher dimensional birational geometry}, Kyoto, 1997, Advanced Studies in Pure Mathematics, vol. 35, (Mathematical Society of Japan, Tokyo, 2002), 1-88. 
\bibitem{DPS} J. P. Demailly, T. Peternell, M. Schneider, {\it  Compact complex manifolds with numerically effective tangent bundles}, J. Algebraic Geom. 3 (1994), no. 2, 295-345.
\bibitem{Gr} Grothendieck, A.: Le groupe de Brauer I, II, III. Exemples et compl{\'e}ments. In: Dix Expos{\'e}s sur la Coho- mologie des Sch{\'e}mas, North-Holland, Amsterdam, 1968, pp.46-188.
\bibitem{Ha} R. Hartshorne, Algebraic geometry. Graduate Texts in Mathematics, No. 52. Springer-Verlag, New York-Heidelberg, 1977.
\bibitem{Hwang} J.M. Hwang, {\it Rigidity of rational homogeneous spaces}, Proceedings of ICM. 2006 Madrid, volume II, European Mathematical Society, 2006, 613-626.
\bibitem{Ko} J. Koll\'ar, {\it Rational curves on algebraic varieties}, Ergebnisse der Mathematik und ihrer Grenzgebiete (3), vol. 32 (Springer, Berlin, 1996).
\bibitem{KMM} J. Koll\'ar, Y. Miyaoka and S. Mori, {\it Rational connectedness and boundedness of Fano manifolds}, J. Differential Geom. 36 (1992), no. 3, 765-779.
\bibitem{Mi} Y. Miyaoka, {\it Numerical characterisations of hyperquadrics}, Complex analysis in several variables-Memorial Conference of Kiyoshi Oka's Centennial Birthday, Advanced Studies of Pure Mathematics, vol. 42, (Mathematical Society of Japan, Tokyo, 2004), 209-235.
\bibitem{Mok} N. Mok, {\it On Fano manifolds with nef tangent bundles admitting 1-dimensional varieties of minimal rational tangents}, Trans. Amer. Math. Soc. 354 (2002), no. 7, 2639-2658.
\bibitem{Mori} S. Mori, {\it Projective manifolds with ample tangent bundles}, Ann. of Math. (2) 110 (1979), no. 3, 593-606. 
\bibitem{NO} C. Novelli, G. Occhetta, {\it Ruled Fano fivefolds of index two}, Indiana Univ. Math. J. 56 (2007), no. 1, 207-241.
\bibitem{OW} G. Occhetta, J. A. Wi\'sniewski, {\it On Euler-Jaczewski sequence and. Remmert-Van de Ven problem for toric varieties}, Math. Z. 241 (2002), 35-44.
\bibitem{OSS} C. Okonek, M. Schneider and H. Spindler, Vector bundles over complex projective space, Progress in Math., vol. 3, Birkh$\rm \ddot{a}$user, Boston, Basel, Stuttgart, 1980.
\bibitem{Ot} G. Ottaviani, {\it Spinor bundles on quadrics}, Trans. Amer. Math. Soc. 307 (1988), no. 1, 301-316. 
\bibitem{Ota} G. Ottaviani, {\it On Cayley bundles on the five-dimensional quadric}, Boll. Un. Mat. Ital. A (7) 4 (1990), 87-100.
\bibitem{PS} K. H. Paranjape, V. Srinivas, {\it Self-maps of homogeneous spaces}, Invent. Math. 98 (1989), no. 2, 425-444.
\bibitem{SolW}  L. Sol\'a Conde, J. A. Wi\'sniewski, {\it On manifolds whose tangent bundle is big and 1-ample}, Proc. London Math. Soc. (3) 89 (2004), no. 2, 273-290.
\bibitem{SW}  M. Szurek, J. A. Wi\'sniewski, {\it Fano bundles over $\PP^3$ and $Q^3$}, Pacific J. Math. 141 (1990), no. 1, 197-208.
\end{thebibliography}
\end{document}